\theoremstyle{plain}
\newtheorem{thm}[equation]{Theorem}
\newtheorem*{thm*}{Theorem}
\newtheorem{prop}[equation]{Proposition}
\newtheorem*{prop*}{Proposition}
\newtheorem*{cor*}{Corollary}
\newtheorem{lem}[equation]{Lemma}
\newtheorem*{lem*}{Lemma}
\newtheorem*{conj*}{Conjecture}
\theoremstyle{definition}
\newtheorem*{defn*}{Definition}
\newtheorem{eg}[equation]{Example}
\newtheorem*{eg*}{Example}
\newtheorem*{ex*}{Exercise}
\newtheorem{rk}[equation]{Remark}
\renewcommand{\L}{\ensuremath{\mathscr{L}}\xspace}
\DeclareMathOperator{\Adm}{Adm}
\DeclareMathOperator{\Conv}{Conv}
\DeclareMathOperator{\ext}{ext}
\DeclareMathOperator{\Perm}{Perm}
\DeclareMathOperator{\OGr}{OGr}
\newcommand{\ad}{\ensuremath{\mathrm{ad}}\xspace}
\newcommand{\aff}{\ensuremath{\mathrm{a}}\xspace}
\newcommand{\der}{\ensuremath{\mathrm{der}}\xspace}
\renewcommand{\int}{\ensuremath{\mathop{\mathrm{int}}}\xspace}
\newcommand{\loc}{\ensuremath{\mathrm{loc}}\xspace}
\newcommand{\naive}{\ensuremath{\mathrm{naive}}\xspace}
\newcommand{\Permsp}{\ensuremath{\Perm^\mathrm{sp}}\xspace}
\newcommand{\spin}{\ensuremath{\mathrm{spin}}\xspace}
\begin{document}

\renewcommand{\O}{\ensuremath{\mathscr{O}}\xspace}

\title[Topological flatness of orthogonal local models. I]{Topological flatness of orthogonal local models in the split, even case. I}
\author{Brian D. Smithling}
\address{University of Toronto, Department of Mathematics, 40 St.\ George St.,\ Toronto, ON  M5S 2E4, Canada}
\email{bds@math.toronto.edu}
\subjclass[2010]{Primary 14G35; Secondary 05E15; 11G18; 17B22}
\keywords{Shimura variety; local model; othogonal group; Iwahori-Weyl group; admissible set}

\begin{abstract}
Local models are schemes, defined in terms of linear algebra, that were introduced by Rapoport and Zink to study the \'etale-local structure of integral models of certain PEL Shimura varieties over $p$-adic fields.  A basic requirement for the integral models, or equivalently for the local models, is that they be flat.  In the case of local models for even orthogonal groups, Genestier observed that the original definition of the local model does not yield a flat scheme.  In a recent article, Pappas and Rapoport introduced a new condition to the moduli problem defining the local model, the so-called spin condition, and conjectured that the resulting ``spin'' local model is flat.  We prove a weak form of their conjecture in the split, Iwahori case, namely that the spin local model is topologically flat.  An essential combinatorial ingredient is the equivalence of $\mu$-admissibility and $\mu$-permissibility for two minuscule cocharacters $\mu$ in root systems of type $D$.
\end{abstract}
\maketitle

\section{Introduction}
\numberwithin{equation}{section}

An important problem in the arithmetic theory of Shimura varieties is the definition and subsequent study of reasonable integral models.  For certain PEL Shimura varieties with parahoric level structure at $p$, Rapoport and Zink \cite{rapzink96} have constructed natural models over the ring of integers 
in the completion of the reflex field at any place lying over $p$.
One of the most basic requirements for the models is that they be flat.  The essential tool to investigate this and other questions of a local nature, also introduced in \cite{rapzink96}, is the \emph{local model:} this is a scheme \'etale-locally isomorphic to the original model, but defined in terms of a purely linear-algebraic moduli problem, and thus --- at least in principle --- more amenable to direct study.

Local models for groups involving only types $A$ and $C$ have received much study in the past decade; see, for example, work of Pappas \cite{pap00}, G\"ortz \cites{goertz01,goertz03,goertz04,goertz05}, Haines and Ng\^o \cite{hngo02b}, Pappas and Rapoport \cites{paprap03,paprap05,paprap08,paprap09}, Kr\"amer \cite{kr03}, and Arzdorf \cite{arz09}.  By contrast, the subject of this paper is the essential case of type $D$:  local models for the split orthogonal similitude group $GO_{2n}$ with Iwahori level structure.

Unfortunately, as observed by Genestier (see \cite{paprap09}*{\s8.3, p.~560}), the local model defined in \cite{rapzink96} \emph{fails to be flat} in the orthogonal case, even when the group is split; subsequently, this scheme has come to be renamed the \emph{naive local model $M^\naive$}.  Failure of flatness has also been observed, first by Pappas \cite{pap00}, for local models in type $A$ and $C$ cases for groups that split only after a ramified field extension.  As in these cases, there is a ``brute force'' correction available to non-flatness of $M^\naive$: one simply \emph{defines} the true local model $M^\loc$ to be the scheme-theoretic closure in $M^\naive$ of the generic fiber $M^\naive_\eta$.  A priori, this definition of $M^\loc$ carries the disadvantage of not admitting a ready moduli-theoretic description.  
Thus it is of interest when such a description can be found.  In \cite{paprap09} Pappas and Rapoport propose to describe $M^\loc$ by adding a new condition, the so-called \emph{spin condition} (see \s\ref{ss:spin_cond}), to the moduli problem defining $M^\naive$.
We denote by $M^\spin$ the subscheme of $M^\naive$ representing Pappas's and Rapoport's strengthened moduli problem.  One obtains $M^\spin \subset M^\naive$ as a closed subscheme, and Pappas and Rapoport show that the generic fibers of the two schemes agree.  They conjecture the following.

\begin{conj*}[Pappas-Rapoport \cite{paprap09}*{Conj.\ 8.1}]
$M^\spin = M^\loc$, that is, $M^\spin$ is the scheme-theoretic closure in $M^\naive$ of the generic fiber.
\end{conj*}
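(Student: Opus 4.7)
The natural plan is to first attack the weaker statement of topological flatness — that $M^\spin$ and $M^\loc$ have the same underlying topological space — which reduces the problem (modulo a reducedness analysis of the special fiber) to a combinatorial identity in the Iwahori--Weyl group.

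My first step would be to identify the geometric points of $M^\spin_s$ with a subset of the affine flag variety of $GO_{2n}$. Concretely, the moduli description of $M^\naive$ embeds its special-fiber points as certain lattice chains, and the Iwahori orbits are indexed by the extended affine Weyl group $\widetilde{W}$ of type $\widetilde{D}_n$. I would then unpack the spin condition of \s\ref{ss:spin_cond} at each such lattice chain and show that its vanishing cuts out exactly those $w \in \widetilde{W}$ whose translation part lies in the convex hull $\Conv(W\mu)$ and which additionally satisfy the appropriate Kottwitz--Rapoport wall-crossing conditions. In other words, the image of $M^\spin_s(\overline{k}) \to \widetilde{W}$ is the $\mu$-permissible set $\Perm(\mu)$, where $\mu$ is the minuscule cocharacter of $GO_{2n}$ corresponding to the Shimura datum.

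Second, the generic fiber analysis of Pappas--Rapoport already identifies $M^\naive_\eta$ as a single $G$-orbit of type $\mu$; taking scheme-theoretic closure and applying the standard Kottwitz--Rapoport description shows that the points of $M^\loc_s(\overline{k})$ correspond to the $\mu$-admissible set $\Adm(\mu) \subset \widetilde{W}$. Since $M^\loc \subset M^\spin$ as closed subschemes, one has the inclusion $\Adm(\mu) \subset \Perm(\mu)$ for free, and topological flatness becomes equivalent to the reverse inclusion.

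The main obstacle is therefore the purely combinatorial identity $\Adm(\mu) = \Perm(\mu)$ for the two minuscule cocharacters in type $D_n$. This is delicate because the analogous equality is known to \emph{fail} in type $D$ for non-minuscule cocharacters, so one must genuinely exploit minusculity. My approach would be to realize $\widetilde{W}$ in the standard signed-permutation model, reformulate the permissibility inequalities face by face using the description of $\ext(\Conv(W\mu))$, and then, given a permissible $w$, construct explicit upward Bruhat chains from $w$ to a translation $t_\lambda$ with $\lambda \in W\mu$ by a finite sequence of affine reflections, organized by the sign pattern of $w$. A careful case split according to which minuscule $\mu$ is considered — and, in the orthogonal case, according to the parity constraints that distinguish the two spin-type components — is what is needed to push the argument through. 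Upgrading the resulting topological statement to the full scheme-theoretic conjecture $M^\spin = M^\loc$ would then require proving that $M^\spin_s$ is generically reduced on each Schubert stratum, which I would leave to a sequel.
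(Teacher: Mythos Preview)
The full conjecture is \emph{not} proved in the paper: it is stated as an open problem, and the paper establishes only the weaker assertion that $M^\spin$ is topologically flat (Theorem~\ref{st:main_result}). Your proposal is candid about this---you sketch a route to topological flatness and explicitly defer the scheme-theoretic upgrade---so neither you nor the paper claims a proof of the statement as written. Note, though, that ``generic reducedness on each Schubert stratum'' is not the right sufficient condition for the upgrade; what is needed is flatness of $M^\spin$ itself over $\O$, equivalently the absence of embedded or extra associated primes in the special fiber, which is strictly stronger.

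For the topological-flatness part, your architecture is essentially the paper's: embed $M^\naive_k$ in the affine flag variety, identify the Schubert cells lying in $M^\spin_k$, reduce to a combinatorial identity in $\wt W$, and prove it by constructing Bruhat-increasing reflection chains. Two points where your sketch is imprecise relative to what actually happens: first, the generic fiber $\OGr(n,2n)$ has \emph{two} connected components, corresponding to the two minuscule cocharacters $\mu_1$, $\mu_2$, so one must work with $W^\circ$ (not the full $W$) and prove $\Adm^\circ(\mu_j)=\Permsp(\mu_j)$ for $j=1,2$ separately. Second, the spin condition does not directly cut out the Kottwitz--Rapoport set $\Perm(\mu)$; the paper first introduces a tailored notion of \emph{spin-permissibility} $\Permsp(\mu)$ (\S\ref{ss:Sch_vties_Mspin}) capturing the spin condition on $T$-fixed points, proves $\Adm^\circ(\mu)=\Permsp(\mu)$ via the reflection argument of \S\ref{ss:pf}, and only afterwards identifies $\Permsp(\mu)$ with $\Perm(\mu)$ (Proposition~\ref{st:perm_spin-perm}). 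This matters because the reflection chain must preserve the spin condition, not merely $GL$-permissibility, and handling that is precisely where the case analysis goes beyond the $GL_n$/$GSp_{2n}$ arguments of Kottwitz--Rapoport.
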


Although the conjecture remains open in general, Pappas and Rapoport have obtained
a considerable amount of computer evidence in support of it, and they explicitly work out the case $n=1$ and part of the case $n=2$ in \cite{paprap09}.  Hand calculations in the case $n = 3$ show that $M^\spin$ is indeed flat with reduced special fiber.  The main result of this paper is the following weak form of the conjecture.

\begin{thm*}[\ref{st:main_result}]
$M^\spin$ is topologically flat, that is, it has dense generic fiber.
\end{thm*}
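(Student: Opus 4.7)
My plan is to proceed by the Kottwitz--Rapoport/G\"ortz strategy, translating the question into a combinatorial one on the Iwahori--Weyl group. The geometric special fiber of $M^\naive$ embeds into an affine flag variety $\mathcal F$ for the split orthogonal group; standard techniques identify $|\overline{M^\naive}|$ set-theoretically with the union of Schubert cells $\mathcal F_w$ for $w$ in the $\mu$-permissible set $\Perm(\mu)\subseteq\widetilde W$, where $\widetilde W$ is the Iwahori--Weyl group and $\mu$ the minuscule cocharacter defining the moduli problem. Similarly, $|\overline{M^\loc}|$ is the union of $\mathcal F_w$ over $w\in\Adm(\mu)$. Since $M^\spin$ is a closed subscheme of $M^\naive$ sharing its generic fiber, we have $M^\loc\subseteq M^\spin\subseteq M^\naive$, and the theorem reduces to the assertion that every Schubert cell appearing in $|\overline{M^\spin}|$ already lies in $|\overline{M^\loc}|$.

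Next, I would pin down the index set for $|\overline{M^\spin}|$ by unpacking the spin condition cell by cell. For each $w\in\Perm(\mu)$ one picks a standard lattice representative $\Lambda_w$ of a point of $\mathcal F_w$, evaluates the half-spin maps on it, and records whether the spin condition holds; this cuts out a subset $\Permsp(\mu)\subseteq\Perm(\mu)$ for which $|\overline{M^\spin}|=\bigcup_{w\in\Permsp(\mu)}\mathcal F_w$. The containment $\Adm(\mu)\subseteq\Permsp(\mu)$ is automatic from $M^\loc\subseteq M^\spin$, so the theorem reduces to the combinatorial identity
\[
\Adm(\mu)=\Permsp(\mu).
\]
As the abstract suggests, the goal is to re-express $\Permsp(\mu)$ as the ordinary permissible set for a suitable \emph{pair} of minuscule cocharacters of type $D_n$ --- those attached to the two half-spin representations --- so that the identity above takes the form of a Kottwitz--Rapoport-style statement ``admissibility equals permissibility'' for these two cocharacters jointly.

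The combinatorial identity is the heart of the proof. In types $A$ and $C$, Kottwitz--Rapoport give $\Adm(\mu)=\Perm(\mu)$ for minuscule $\mu$, but in type $D$ strict containment $\Adm(\mu)\subsetneq\Perm(\mu)$ holds, and the ``excess'' elements of $\Perm(\mu)\setminus\Adm(\mu)$ are precisely what prevent $M^\naive$ from being flat. I would describe these excess elements explicitly via an alcove-theoretic or signed-permutation model of $\widetilde W$ and, for each of them, exhibit a specific half-spin coordinate on which $\Lambda_w$ fails the spin condition, so that $w\notin\Permsp(\mu)$. The two half-spin representations have to be played against each other: at a lattice coming from an excess element, the spin condition may fail on one piece even when it is satisfied on the other, and it is only their joint constraint that is strong enough to select exactly $\Adm(\mu)$.

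The main obstacle is this last combinatorial identification, which requires both a workable parametrization of $\Perm(\mu)\setminus\Adm(\mu)$ and a sufficiently explicit formula for the spin condition on the standard lattices $\Lambda_w$; once the identity $\Adm(\mu)=\Permsp(\mu)$ is in hand, topological flatness of $M^\spin$ follows formally from the Schubert-cell descriptions of the two special fibers.
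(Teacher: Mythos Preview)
Your overall architecture is right and matches the paper: embed the special fiber in the affine flag variety, identify the Schubert cells in $M^\spin_k$ by a set $\Permsp(\mu)\subset\wt W$, and reduce topological flatness to the combinatorial identity $\Adm^\circ(\mu)=\Permsp(\mu)$ for the two minuscule cocharacters $\mu_1$, $\mu_2$. Where you go wrong is in the combinatorics.

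First, a factual slip: for the minuscule cocharacters $\mu_1$, $\mu_2$ in type $D$ the Kottwitz--Rapoport permissible set (for the \emph{orthogonal} root datum) satisfies $\Adm^\circ(\mu)=\Perm(\mu)$, not strict containment; indeed the paper proves this as a byproduct (Proposition~\ref{st:perm_spin-perm}). The set indexing $M^\naive_k$ is not $\Perm(\mu)$ but the larger set of \emph{$GL$-permissible} elements (those permissible for $\mu$ inside $\wt W_{GL_{2n}}$, intersected with $\wt W$). The genuine ``excess'' the spin condition must kill lies between $GL$-permissibility and $\Permsp(\mu_1)\amalg\Permsp(\mu_2)$, not between $\Perm(\mu)$ and $\Adm^\circ(\mu)$.

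Second, and more seriously, your proposed attack on the identity is circular. You want to list the excess elements explicitly and check that each fails the spin condition; but describing that excess set amounts to already knowing $\Adm^\circ(\mu)$, which is the unknown. The paper avoids this by an \emph{inductive} argument in the spirit of Kottwitz--Rapoport's proof for $GL_n$: given a $\mu$-spin-permissible $w$ that is not a translation, one exhibits an affine reflection $s_\alpha$ with $w<s_\alpha w$ and $s_\alpha w$ still $\mu$-spin-permissible, then iterates until a translation is reached. The new difficulty (and the real content of \S\ref{s:adm_perm_sets}) is that the obvious Kottwitz--Rapoport reflection may destroy the spin condition, so one has to run a case analysis on the interval structure of the sets $K_m$ to find a reflection that preserves it. Your proposal does not anticipate this step, and without it the argument does not close.
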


In other words, the theorem asserts that the underlying topological spaces of $M^\spin$ and $M^\loc$ are the same.  The strategy to prove the theorem is the same as that pioneered in G\"ortz's original paper \cite{goertz01}:  we 
\begin{enumerate}
\item
   embed the special fiber $M^\naive_k$ in an appropriate affine flag variety \F, this time attached to $GO_{2n}$, over the residue field $k$;
\item\label{it:sch_cells}
   identify the set-theoretic images of $M^\naive_k$ and $M^\spin_k$ as unions of certain Schubert cells in \F, and obtain a good description of the Schubert cells occurring in the image of $M^\spin_k$; and
\item
   show that the Schubert cells in the image of $M^\spin_k$ are all in the closure of the generic fiber.
\end{enumerate}

By far, \eqref{it:sch_cells} is the most nontrivial.  The problem of obtaining a good description of the Schubert cells occurring in $M^\spin_k$ is essentially that of identifying the Schubert cells of maximal dimension in $M^\spin_k$, since these parametrize the irreducible components of $M^\spin_k$; and this translates to a purely combinatorial problem in the Iwahori-Weyl group $\wt W$ of $GO_{2n}$, which indexes the Schubert cells in \F.  In this form, the problem becomes essentially that of \emph{$\mu$-permissibility vs.\ $\mu$-admissibility} considered by Kottwitz-Rapoport \cite{kottrap00} and subsequently by Haines-Ng\^o \cite{hngo02a}.  More precisely, consider the dominant minuscule cocharacters
\[\tag{$1.1$}\label{disp:mu's}
   \mu_1 := (1^{(n)},0^{(n)})
   \quad\text{and}\quad
   \mu_2 := (1^{(n-1)},0,1,0^{(n-1)})
\]
for $GO_{2n}$ (expressed as cocharacters for the standard diagonal torus in the ambient
$GL_{2n}$), and regard them as translation elements in $\wt W$.  Let $W^\circ$ denote the finite Weyl group of the identity component $GO_{2n}^\circ$.  The special fiber $M_k^\spin$ has two connected components, and it is easy to see that the Schubert cells corresponding to the $W^\circ$-conjugates of $\mu_1$ (resp., $\mu_2$) are all contained in one component (resp., the other).  Of course, the closures of the Schubert cells obtained in this way are again contained in $M^\spin_k$.  For $\mu \in \{\mu_1,\mu_2\}$, the \emph{$\mu$-admissible set $\Adm^\circ(\mu)$} consists of the $w\in \wt W$ whose corresponding Schubert cell $C_w$ is contained in the closure of $C_{\mu'}$ for some $\mu' \in W^\circ \mu$.  On the other hand, the condition for a given Schubert cell $C_w$ to be contained in $M_k^\spin$ admits a combinatorial formulation in terms of $w$, and we define the \emph{$\mu$-spin-permissible set $\Permsp(\mu)$} to consist of the $w \in \wt W$ for which $C_w$ is contained in the connected component of $M_k^\spin$ marked by $\mu$.  There is also a third set to consider, the \emph{$\mu$-permissible set $\Perm(\mu)$} defined in \cite{kottrap00}.

\begin{thm*}[\ref{st:main_result}, \ref{st:perm_spin-perm}]
For $\mu \in \{\mu_1,\mu_2\}$, we have equalities of subsets of $\wt W$
\[
   \Adm^\circ(\mu) = \Permsp(\mu) = \Perm(\mu).
\]
\end{thm*}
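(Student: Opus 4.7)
My plan is to prove the stated equalities via the chain
\[
   \Adm^\circ(\mu) \subset \Permsp(\mu) \subset \Perm(\mu) \subset \Adm^\circ(\mu),
\]
in which the first two inclusions are formal and the third contains the main combinatorial content of the theorem.

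The inclusion $\Adm^\circ(\mu) \subset \Permsp(\mu)$ is geometric: by definition any element of $\Adm^\circ(\mu)$ indexes a Schubert cell in the closure of some $C_{\mu'}$ with $\mu' \in W^\circ \mu$; each such $C_{\mu'}$ lies in the $\mu$-marked component of $M^\spin_k$; and since $M^\spin$ is closed in $M^\naive$, each of its connected components is closed, so $C_w \subset \overline{C_{\mu'}}$ remains in that same component. The inclusion $\Permsp(\mu) \subset \Perm(\mu)$ is then checked by unpacking the combinatorial shape of the spin condition on $w \in \wt W$ and comparing it term-by-term with the Kottwitz-Rapoport permissibility condition of \cite{kottrap00}, showing that the former implies the latter.

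The heart of the theorem is $\Perm(\mu) \subset \Adm^\circ(\mu)$, a purely combinatorial problem in the Iwahori-Weyl group of $GO_{2n}$. My approach is first to coordinatize $\wt W$ by pairs (signed permutation, integer translation vector) adapted to type $\wt D_n$; next to describe $\Perm(\mu_i)$ concretely by specializing the defining inequalities to each of the two minuscule coweights $\mu_1$ and $\mu_2$; and finally, given any $w \in \Perm(\mu)$, to construct an explicit ascending chain $w = w_0 \leq w_1 \leq \dots \leq w_r = \mu'$ in the Bruhat order with $\mu' \in W^\circ \mu$, obtained by iteratively applying reflections in affine roots that strictly increase length while remaining inside $\Perm(\mu)$.

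The main obstacle will be carrying out this combinatorial ascent in type $D$. In the type $A$ setting of Kottwitz-Rapoport the finite Weyl group is the full symmetric group and coordinates may be freely permuted; here the type $D$ finite Weyl group $W^\circ$ admits only an even number of sign changes, which forces a parity analysis at every step, and the two orbits $W^\circ \mu_1$ and $W^\circ \mu_2$ must be handled separately. I anticipate analyzing $\mu_1$ in detail and then transferring the result to $\mu_2$ via the outer automorphism of $D_n$ that exchanges the two end nodes of the Dynkin diagram and swaps $\mu_1 \leftrightarrow \mu_2$. Once these combinatorial equalities are in hand, topological flatness of $M^\spin$ is immediate from the outline in the introduction: the Schubert cells appearing in $M^\spin_k$ are precisely those indexed by $\Permsp(\mu_1) \sqcup \Permsp(\mu_2)$, which now coincides with $\Adm^\circ(\mu_1) \sqcup \Adm^\circ(\mu_2)$, and each admissible cell visibly lies in the closure of the generic fiber $M^\naive_\eta$.
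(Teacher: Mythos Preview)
Your overall strategy---an ascent via affine reflections to a translation element---matches the paper's, but your chain of inclusions is organized differently from the paper's, and the difference is not cosmetic.

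The paper does \emph{not} prove $\Permsp(\mu) \subset \Perm(\mu)$ by direct term-by-term comparison as you propose. Instead it first carries out the hard ascent inside $\Permsp(\mu)$, establishing $\Adm^\circ(\mu) = \Permsp(\mu)$ (\S\ref{s:adm_perm_sets}); then $\Permsp(\mu) \subset \Perm(\mu)$ comes for free from the general Kottwitz--Rapoport result that admissibility always implies permissibility; and finally $\Perm(\mu) \subset \Permsp(\mu)$ is a short direct convex-hull computation (\S\ref{ss:perm_spin-perm}): if $\mu_k^w$ is totally isotropic then $\mu_k^w = \mu_{2n-k}^w$, so $(\mu_k^w + \mu_{2n-k}^w)/2 = \mu_k^w$ lies in $\Conv(W^\circ\mu) \cap X_*(T) = W^\circ\mu$. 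So the paper's direction of direct verification is the \emph{opposite} of yours.

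This matters because the ascent in \S\ref{ss:pf} uses the spin description (P1)--(P3) in an essential way: the spin condition is what rules out $\wt a = a^*$ at the very start, and is invoked repeatedly in the case analysis to control which $E_k^{s_\alpha w}$ can become totally isotropic in the wrong $W^\circ$-orbit after a reflection. To run your proposed ascent directly from the convex-hull definition of $\Perm(\mu)$ you would need an equally concrete coordinate description to work with; the cleanest such description \emph{is} spin-permissibility, so in practice you would first prove $\Perm(\mu) \subset \Permsp(\mu)$ (exactly the paper's \S\ref{ss:perm_spin-perm}) and then run the paper's ascent. Your route is not wrong, but the inclusion you call ``formal'' is not the one that is actually easy, and the step you identify as the ``main combinatorial content'' will in practice collapse back onto the paper's argument once you unwind $\Perm(\mu)$ into usable coordinates.

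Your plan to handle $\mu_1$ first and transfer to $\mu_2$ via the diagram automorphism $\tau$ is a legitimate shortcut; the paper instead treats both cocharacters uniformly throughout the case analysis.
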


The theorem is an analog of theorems for $GL_n$ and $GSp_{2n}$ obtained by Kottwitz and Rapoport \cite{kottrap00}*{3.5, 4.5, 12.4}.  It is especially worth comparing with the symplectic case.  Indeed, denote by $\wt W_{GL_{2n}}$ (resp., $\wt W_{GSp_{2n}}$) the Iwahori-Weyl group for $GL_{2n}$ (resp., $GSp_{2n}$).  Then $\wt W$ and $\wt W_{GSp_{2n}}$ become identified under these groups' respective natural embeddings into $\wt W_{GL_{2n}}$.  However, the relevant admissible and permissible sets in $\wt W$ and $\wt W_{GSp_{2n}}$ do not agree.  In the symplectic case, Kottwitz and Rapoport show these sets are obtained by intersecting $\wt W_{GSp_{2n}}$ with the relevant sets in $\wt W_{GL_{2n}}$, so that the theorem for $GSp_{2n}$ follows from the theorem for $GL_{2n}$.  But there seems to be no such royal road in the orthogonal case.  To prove our  theorem, we go back to Kottwitz's and Rapoport's original argument for $GL_n$ and adapt it to the orthogonal setting, where some new subtleties arise.

Kottwitz and Rapoport define $\mu$-admissibility and $\mu$-permissibility for any co\-character $\mu$ in any extended affine Weyl group attached to a root datum, and they show that $\mu$-admissibility always implies $\mu$-permissibility.  However, Haines and Ng\^o \cite{hngo02a}*{7.2} have shown that the reverse implication does not hold in general.  On the other hand, motivated by considerations arising from Shimura varieties, Rapoport \cite{rap05}*{\s3, p.~283} has raised the question of whether $\mu$-admissibility and $\mu$-permissibility agree for \emph{minuscule} cocharacters $\mu$, or even for sums of dominant minuscule cocharacters.  In the particular setting of this paper, $\wt W$ contains exactly three dominant minuscule cocharacters modulo the subgroup $\ZZ\cdot (1,\dotsc,1)$: $\mu_1$, $\mu_2$, and
%
\[
   \mu_3 := (1,0^{(2n-2)},-1).
\]
We give a proof of the equality $\Adm^\circ(\mu_3) = \Perm(\mu_3)$ in \cite{sm09b}.  Results of Kottwitz-Rapoport \cite{kottrap00}, this paper, and \cite{sm09b} combine to answer Rapoport's question in the affirmative for all minuscule $\mu$ in root data involving only types $A$, $B$, $C$, and $D$.  By contrast, we shall show in \cite{sm10a} that the answer to the more optimistic question, namely whether $\mu$-admissibility and $\mu$-permissibility are equivalent for $\mu$ a sum of dominant minuscule cocharacters, can be negative.

Somewhat surprisingly, Pappas and Rapoport have discovered that a version of the spin condition also turns up in their study of local models for ramified, quasi-split $GU_n$ \cite{paprap09}.  We shall show that the ``spin'' local models they define are topologically flat in \cites{sm09c,sm10a}.

For simplicity, in this paper we focus solely on the case of Iwahori level structure, and we ignore the explicit connection between local models and Shimura varieties --- although this is certainly
our main source of motivation to study local models.
We intend to take up the case of general parahoric level structure, as well as the connection to Shimura varieties, in a subsequent paper.



We now outline the contents of the paper.  In \s\ref{s:loc_mod} we review the definitions of orthogonal local models, both the naive version and the strengthened version incorporating the Pappas-Rapoport spin condition.  Sections \ref{s:G0_2n}--\ref{s:wtW} consist of some preparation of a group-theoretic nature for our subsequent discussion of the affine flag variety for $GO_{2n}$ over $k$.  In \s\ref{s:aff_flag_vty} we review the affine flag variety itself.  In \s\ref{s:embedding}, we embed the special fiber of the naive local model into the affine flag variety, and we use this to reduce the question of topological flatness for the spin model to the combinatorial identity $\Adm^\circ(\mu) = \Permsp(\mu)$ for $\mu \in \{\mu_1,\mu_2\}$.  In \s\ref{s:adm_perm_sets} we prove the identity $\Adm^\circ(\mu) = \Permsp(\mu)$, as well as the identity $\Permsp(\mu) = \Perm(\mu)$, for $\mu \in \{\mu_1,\mu_2\}$; this forms the technical heart of the paper.


\subsection*{Acknowledgments}  It is a pleasure to express my thanks to Ulrich G\"ortz and Michael Rapo\-port for their generosity of time, conversation, and advice in support of this project.  I am further indebted to Rapoport for introducing me to the subject.  I also thank Tom Haines and Eva Viehmann for helpful conversation; G\"ortz, Robert Kottwitz, and Rapoport for offering comments and suggestions on a preliminary draft of this paper; and the referee for offering further comments.  The bulk of the work presented here was conducted at the Max-Planck-Institut f\"ur Mathematik in Bonn, which I am pleased to acknowledge for its support and excellent working conditions.

\subsection*{Notation}
To maintain a certain uniformity of exposition, we work with respect to a fixed integer $n \geq 2$; the case $n = 1$ is handled completely in \cite{paprap09}*{Ex.\ 8.2}.
We work over a discretely valued, non-Archimedean field $F$ with ring of integers \O, uniformizer $\pi$, and residue field $k$, which we assume of characteristic not $2$.  We also employ an auxiliary discretely valued, non-Archimedean field $K$, this time supposed Henselian with valuation $\ord$, ring of integers $\O_K$, uniformizer $t$, and the same residue field $k$; eventually $K$ will be the field $k((t))$ of Laurent series over $k$.

We relate objects by writing $\iso$  for isomorphic, $\ciso$  for canonically isomorphic, and $=$  for equal.  The expression $(a^{(r)}, b^{(s)}, \dotsc)$ denotes the tuple with $a$ repeated $r$ times, followed by $b$ repeated $s$ times, and so on. Given an element $i \in \{1,\dotsc,2n\}$, we write $i^* := 2n+1-i \in \{1,\dotsc,2n\}$.

\section{Orthogonal local models}\label{s:loc_mod}
\numberwithin{equation}{subsection}
We begin by recalling the definition and some of the discussion of orthogonal local models from the paper of Pappas and Rapoport \cite{paprap09}*{\s8}.

\subsection{Lattices}
In this subsection we collect some notation and terminology on \O-lattices in the vector space $V:= F^{2n}$.

Let $e_1$, $e_2,\dotsc$, $e_{2n}$ denote the standard ordered basis in $V$.  We endow $V$ with the split symmetric $F$-bilinear form $h$ whose matrix with respect to the standard basis is
\begin{equation}\label{disp:std_form}
   \begin{pmatrix}
     &  &  1\\
     & \iddots\\
     1
   \end{pmatrix};
\end{equation}
that is, $h(e_i,e_j) = \delta_{i^*,j}$.  Given an \O-lattice $\Lambda \subset V$, we denote by $\wh \Lambda$ the \emph{$h$-dual of $\Lambda$},
\[
   \wh \Lambda := \{\, x\in V \mid h(\Lambda,x) \subset \O \,\}.
\]
Then $\wh \Lambda$ is an \O-lattice in $V$, and $h$ restricts to a perfect \O-bilinear pairing
\begin{equation}\label{disp:perf_pairing}
   \Lambda \times \wh \Lambda \to \O.
\end{equation}

Given a nonempty collection \L of lattices in $V$, we say that \L is
\begin{itemize}
\item 
   \emph{periodic} if $a\Lambda \in \L$ for all $\Lambda \in \L$ and $a\in F^\times$;
\item
   \emph{self-dual} if $\wh \Lambda \in \L$ for all $\Lambda \in \L$; and
\item
   a \emph{chain} if the lattices in \L are totally ordered under inclusion.
\end{itemize}
We say that a periodic lattice chain is \emph{complete} if all successive quotients are $k$-vector spaces of dimension $1$.

For $i = 2nk+r$ with $0 \leq r \leq 2n-1$, we define the \O-lattice
\begin{equation}\label{disp:Lambda_i}
   \Lambda_i := \sum_{j=1}^r\pi^{-k-1}\O e_j + \sum_{j=r+1}^{2n} \pi^{-k}\O e_j \subset V.
\end{equation}
Then $\wh\Lambda_i = \Lambda_{-i}$ for all $i$, and the $\Lambda_i$'s form a complete, periodic, self-dual lattice chain $\Lambda_\bullet$, which we call the \emph{standard chain},
\[
   \dotsb \subset \Lambda_{-2} \subset \Lambda_{-1} \subset \Lambda_0 \subset \Lambda_1 \subset \Lambda_2 \subset \dotsb.
\]

Let $f_i\colon \O^{2n} \to \O^{2n}$ multiply the $i$th standard basis element by $\pi$ and send all other standard basis elements to themselves.  Then there is a unique isomorphism of chains of \O-modules
\begin{equation}\label{disp:latt_triv}
   \vcenter{
   \xymatrix{
      \dotsb\,\, \ar@{^{(}->}[r]
         & \Lambda_0\, \ar@{^{(}->}[r] 
         & \Lambda_1\, \ar@{^{(}->}[r]
         & \,\dotsb\,\, \ar@{^{(}->}[r] 
         & \Lambda_{2n}\, \ar@{^{(}->}[r]
         & \,\dotsb\\
      \dotsb\, \ar[r]^{f_{2n}}
         & \O^{2n} \ar[u]_\sim \ar[r]^-{f_1}
         & \O^{2n} \ar[u]_-\sim \ar[r]^-{f_2}
         & \,\dotsb\, \ar[r]^-{f_{2n}}
         & \O^{2n} \ar[u]_\sim \ar[r]^-{f_1}
         & \,\dotsb
   }
   }
\end{equation}
such that the leftmost vertical arrow identifies the standard ordered basis of $\O^{2n}$ with the ordered \O-basis $e_1,\dotsc,e_{2n}$ of $\Lambda_0$.

\subsection{Naive local models}
In this subsection we recall the definition of naive local models from Rapoport's and Zink's book \cite{rapzink96} in the orthogonal case.  Given an \O-module $M$ and an \O-scheme $S$, we write $M_S$ for the quasi-coherent $\O_S$-module $M \tensor_\O \O_S$.

Let \L be a periodic self-dual lattice chain in $V$. The \emph{naive local model
$M_\L^\naive$ attached to $\L$} is the following contravariant functor on the
category of \O-schemes.  Given an \O-scheme $S$, an $S$-point in $M_\L^\naive$ consists of, up to an obvious notion of isomorphism,
\begin{itemize}
\item
   a functor
   \[
      \xymatrix@R=0ex{
         \L\vphantom{(\O_S)} \ar[r] & {}(\text{$\O_S$-modules})\\
         \Lambda\vphantom{\F_\Lambda} \ar@{|->}[r] & \F_\Lambda,
      }
   \]
   where \L is regarded as a category in the obvious way; together with
\item
   an injection $\F_\Lambda \inj \Lambda_S$ for each $\Lambda\in\L$, functorial in $\Lambda$;
\end{itemize}
satisfying, for all $\Lambda\in\L$,
\begin{enumerate}
\renewcommand{\theenumi}{LM\arabic{enumi}}
\item 
   $\F_\Lambda$ embeds in $\Lambda_S$ as an $\O_S$-locally direct summand of rank $n$;
\item\label{it:period_cond}
   the isomorphism $\Lambda_S \isoarrow (\pi \Lambda)_S$ obtained by tensoring $\Lambda
   \xra[\sim]\pi \pi \Lambda$ identifies $\F_\Lambda$ with $\F_{\pi \Lambda}$; and
\item\label{it:perp_cond}
   the perfect $\O_S$-bilinear pairing $\Lambda_S \times \wh \Lambda_S \to \O_S$
   obtained by tensoring \eqref{disp:perf_pairing} identifies $\F_\Lambda^\perp \subset \wh \Lambda_S$
   with $\F_{\wh \Lambda}$, where for any $\O_S$-submodule $M \subset \Lambda_S$, $M^\perp \subset \wh \Lambda_S$ is the subsheaf of sections that pair to $0$ with all sections of $M$.
\end{enumerate}

The functor $M_\L^\naive$ is plainly represented by a closed subscheme, which
we again denote $M_\L^\naive$, of a finite product of Grassmannians over
$\Spec \O$.

If $\pi$ is invertible on $S$, then any inclusion $\Lambda \subset \Lambda'$ of
\O-lattices becomes an isomorphism after tensoring with $\O_S$. Hence, for such
$S$, any $S$-point of $M_\L^\naive$ is determined by $\F_\Lambda \inj \Lambda_S$ for any single $\Lambda\in\L$. Hence $M_\L^\naive$ has generic fiber $\OGr(n,2n)_F$, the
orthogonal Grassmannian of totally isotropic $n$-planes in $2n$-space; this is a smooth $\binom n 2$-dimensional scheme with two
components, each isomorphic to $SO(h)/P$, where $P \subset SO(h)$ is a parabolic subgroup
stabilizing some totally isotropic $n$-plane.

In this paper we restrict to the Iwahori case, that is, to local models attached to \emph{complete} lattice chains.  It is not hard to verify directly that the special orthogonal group $SO(h)(F)$ acts transitively on the complete periodic self-dual lattice chains in $V$.  Hence the local models attached to any two complete lattice chains are isomorphic.  We shall work with respect to the standard chain $\Lambda_\bullet$, and we abbreviate $M^\naive := M_{\Lambda_\bullet}^\naive$.

The chain isomorphism \eqref{disp:latt_triv} permits a very concrete description of the points of $M^\naive$:  an $R$-point consists of $R$-submodules $\F_0$, $\F_1,\dotsc$, $\F_{2n} \subset R^{2n}$, each a locally direct summand of rank $n$, such that $(f_i \tensor R)(\F_{i-1}) \subset \F_i$ for all $i = 1,\dotsc$, $2n$; $\F_0 = \F_{2n}$; and $\F_i^\perp = \F_{2n-i}$ for all $i = 1,\dotsc,$ $2n$, where $R^{2n}$ carries the split symmetric form having matrix \eqref{disp:std_form} with respect to its standard basis.

\subsection{The spin condition of Pappas and Rapoport}\label{ss:spin_cond}
In \cite{paprap09}, Pappas and Rapoport introduce a conjectural correction to
the non-flatness of $M_\L^\naive$ by adding a new constraint, the \emph{spin
condition}, to the moduli problem. They define the spin condition
in the case of an arbitrary nondegenerate symmetric bilinear form $h$ on $V$. We
are only concerned in this paper with the case that $h$ is \emph{split}. The formulation
of the spin condition simplifies a bit in the split case: namely, we can
get by without explicit use of the \emph{discriminant algebra} of
\cite{paprap09}*{\s7.1}. It is a simple exercise to check that the formulation of
the spin condition we're about to give is equivalent to the spin condition in 
\cite{paprap09} in the split case.

To formulate the spin condition, we shall recall only the bare minimum of linear
algebra we need from \cite{paprap09}*{\s7}. In particular, we refer to
\cite{paprap09} for a more expansive and satisfying version of the
following discussion.

For a subset $E \subset \{1,\dotsc,2n\}$ of cardinality $n$, set
\begin{equation}\label{disp:e_E}
   e_E := e_{j_1}\wedge\dotsb\wedge e_{j_n} \in \sideset{}{_F^n}{\bigwedge} V,
\end{equation}
where $E = \{j_1,\dotsc,j_n\}$ with $j_1 < \dotsb < j_n$.  Given such $E$, we
also set 
\begin{equation}\label{disp:E^*_E^perp}
   E^* := 2n+1-E \qquad\text{and}\qquad E^\perp := (E^*)^c = (E^c)^*,
\end{equation}
where the set complements are taken in $\{1,\dotsc,2n\}$.  Then $E^*$ specifies the indices $j'$ such that $h(e_j,e_{j'}) = 1$ for some $j\in E$, and $E^\perp$ specifies the $j'$ such that $h(e_j,e_{j'}) = 0$ for all $j\in E$.

We define an operator $a$ on $\bigwedge^n V$ by its action on the standard
basis elements $e_E$ for varying $E$,
\[
   a(e_E) := \sgn(\sigma_E)e_{E^\perp},
\]
where $\sigma_E$ is the permutation on $\{1,\dotsc,2n\}$ sending $\{1,\dotsc,n\}$ to the elements of $E^*$ in decreasing order, and sending $\{n+1,\dotsc,2n\}$ to the elements of $E^\perp$ in increasing order.  Then $a$ satisfies $a^2 = \id_{\bigwedge^n V}$ \cite{paprap09}*{Prop.\ 7.1}.  Hence $\bigwedge^n V$ decomposes as
\[
   \sideset{}{^n}{\bigwedge} V 
      = \Bigl(\sideset{}{^n}{\bigwedge} V\Bigr)_+ 
            \oplus \Bigl(\sideset{}{^n}{\bigwedge} V\Bigr)_-,
\]
where $\bigl(\bigwedge^n V\bigr)_\pm$ denotes the $\pm 1$ eigenspace for $a$.  Using that $a^2$ is the identity, we see that
\begin{equation}\label{disp:spin_basis}
   \Bigl(\sideset{}{^n}{\bigwedge} V\Bigr)_\pm 
      = \spn_F\{e_E \pm \sgn(\sigma_E)e_{E^\perp}\},
\end{equation}
where $E$ ranges through the subsets of $\{1,\dotsc,2n\}$ of cardinality $n$.

Now let $\Lambda \subset V$ be an \O-lattice.  Then $\bigwedge^n_\O \Lambda$ is naturally an \O-submodule of $\bigwedge^n_F V$, and we set
\[
   \Bigl(\sideset{}{_\O^n}{\bigwedge} \Lambda\Bigr)_\pm
      := \Bigl(\sideset{}{_\O^n}{\bigwedge} \Lambda \Bigr)
            \cap \Bigl(\sideset{}{_F^n}{\bigwedge} V\Bigr)_\pm.
\]

We are now ready to state the spin condition. Let \L be a periodic self-dual
lattice chain. We say that an $S$-point $\{\F_\Lambda \inj \Lambda_S\}_{\Lambda\in\L}$ of
$M_\L^\naive$ \emph{satisfies the spin condition} if
\begin{enumerate}
\renewcommand{\theenumi}{LM\arabic{enumi}}
\setcounter{enumi}{3}
\item
   Zariski locally on $S$, either $\bigwedge^n_{\O_S} \F_\Lambda$ is contained in
   \[
      \im\Bigl[\Bigl(\sideset{}{_\O^n}{\bigwedge} \Lambda\Bigr)_+\tensor_\O \O_S 
         \to \sideset{}{_{\O_S}^n}{\bigwedge}\Lambda_S \Bigr]
   \]
   for all $\Lambda$ in \L, or in 
   \[
      \im\Bigl[\Bigl(\sideset{}{_\O^n}{\bigwedge} \Lambda\Bigr)_-\tensor_\O \O_S 
         \to \sideset{}{_{\O_S}^n}{\bigwedge}\Lambda_S \Bigr]
   \]
   for all $\Lambda$ in \L.
\end{enumerate}
The \emph{spin local model attached to $\L$}, which we denote $M_\L^\spin$, is the closed subscheme of $M_\L^\naive$ whose points satisfy the spin condition.  Pappas and Rapoport show in \cite{paprap09}*{\s8.2.1} that the arrow $(M_\L^\spin)_F \inj (M_\L^\naive)_F$ on generic fibers is an isomorphism.

As in the previous subsection, when working with complete periodic self-dual chains \L, $M_\L^\spin$ is independent of \L up to isomorphism, and we put $M^\spin := M_{\Lambda_\bullet}^\spin$.

%
%
%
%

\section{Orthogonal similitude group}\label{s:G0_2n}
In this section we review some basic facts about split $GO_{2n}$.  We switch to working over the field $K$.  Except in \s\ref{ss:max_torus}, $K$ may be an arbitrary field of characteristic not $2$; in \s\ref{s:Iwahori} we'll return to our blanket assumptions on $K$ stated in the introduction.

\subsection{Orthogonal similitudes}
Abusing notation, we denote again by $h$ the symmetric bilinear form on $K^{2n}$ whose matrix with respect to the standard ordered basis is \eqref{disp:std_form}.  We denote by $G := GO_{2n} := GO(h)$ the algebraic group over $K$ of \emph{orthogonal
similitudes} of $h$: for any $K$-algebra $R$, $G(R)$ is the set of
elements $g\in GL_{2n}(R)$ satisfying $h_R(gx,gy) =
c(g)h_R(x,y)$ for some $c(g) \in R^\times$ and all $x$, $y\in R^{2n}$,
where $h_R$ is the induced form on $R^{2n}$.  As the form $h$
is nonzero, the scalar $c(g)$ is uniquely determined, and $c$ defines an
exact sequence of $K$-groups
\[
   1 \to O \to G \xra c \GG_m \to 1
\]
with evident kernel $O:= O_{2n} := O(h)$ the orthogonal group of $h$.  
The displayed sequence splits (noncanonically), so that the choice of a
splitting presents $G$ as a semidirect product $O \rtimes \GG_m$.

\subsection{Center}
The \emph{center} $Z := Z_G$ of $G$ consists of the scalar matrices; on
$R$-valued points,
\[
   Z(R) = \{\, r\cdot\id \in GL_{2n}(R) \mid r\in R^{\times} \,\},
\]
where $\id$ denotes the identity matrix, so that $Z \ciso \GG_m$.

We write $G_\ad := PGO_{2n} := PGO(h) := G/Z$ for the adjoint group.

\subsection{Connected components}
The group $G$ possesses two connected components.  For $g \in G(R)$ with $\Spec R$
connected, the corresponding morphism $\Spec R \to G$ factors through the
identity component or the non-identity component according as
$c(g)^{-n}\det(g)$ is $+1$ or $-1$, respectively.   The identity component
$G^\circ$ is split reductive.

\subsection{Standard maximal torus}\label{ss:max_torus}
Let $T$ denote the standard split maximal torus of diagonal matrices in $G$; on
$R$-points,
\[
   T(R):= \{\, \diag(a_1,\dotsc,a_{2n}) \in GL_{2n}(R) \mid 
	         a_1a_{2n} = a_2 a_{2n-1} = \dotsb = a_n a_{n+1} \,\},
\]
so that $T \iso \GG_m^{n+1}$.  

Now let us assume $K$ is as in the introduction, so that it is discretely valued with valuation ring $\O_K$ and uniformizer $t$.  Then we identify the cocharacter lattice $X_*(T)$ with $T(K)/T(\O_K)$ via the rule $\lambda \mapsto \lambda(t) \bmod T(\O_K)$, and we identify $T(K)/T(\O_K)$ with 
\[\tag{$*$}\label{disp:cochar}
   \{\, (r_1,\dotsc,r_{2n}) \in \ZZ^{2n} \mid r_1 + r_{2n} = \dots = r_n + r_{n+1} \,\}
\]
via $\ord$.  These identifications in turn identify
\begin{itemize}
   \item
      $X_*(T_\der)$ with the subgroup of \eqref{disp:cochar} of elements $(r_1,\dotsc,r_{2n})$ such that $r_1 + r_{2n} = \dots = r_n + r_{n+1} = 0$, where $G_\der := SO := SO_{2n} := SO(h)$ is the derived group of $G$ and $T_\der$ is its split maximal torus $T \cap G_\der$; and
   \item
      $X_*(T_\ad)$ with the quotient of \eqref{disp:cochar} by the subgroup $\ZZ\cdot (1,\dotsc,1)$, where $T_\ad := T/Z$ is the split maximal torus in $G_\ad$ obtained as the image of $T$.
\end{itemize}


\subsection{Roots, coroots}\label{ss:roots}
Let $\chi_i$ denote the character on $T$ sending
\[
   \diag(a_1,\dotsc,a_{2n}) \mapsto a_i.
\]
The \emph{roots} of the pair $(G, T)$ are the set
\begin{align*}
   \Phi_G := \Phi_{G,T} &:= \{\,\pm(\chi_i-\chi_j) \mid 1 \leq i < j < i^*\,\}\\
         &\phantom{:}= \{\, \pm(\chi_i - \chi_j),\pm(\chi_i + \chi_j - c) 
	         \mid 1 \leq i < j \leq n \,\},
\end{align*}
where we use the same symbol $c$ to denote the composite $T \inj G \xra c \GG_m$.  Of course, the roots of $G$ descend to the roots $\Phi_{G_\ad} := \Phi_{G_\ad, T_\ad}$ of the pair $(G_\ad, T_\ad)$.  When $n=1$, $G^\circ \iso \GG_m^2$ is abelian and $\Phi_G = \Phi_{G_\ad} = \emptyset$. Otherwise, the root system $\Phi_{G_\ad}$ is of type $A_1 \times A_1$ for $n =2$, $A_3$ for $n = 3$, and $D_n$ for $n \geq 4$.

For $n > 1$, we take the $n$ roots
\begin{equation}\label{disp:pos_coroots}
   \chi_1-\chi_2,\dotsc,\ \chi_{n-1}-\chi_n,\ \chi_{n-1} + \chi_n -c
\end{equation}
as simple roots.

For $1 \leq i \leq n$, let $\lambda_i \in X_*(T)$ denote the cocharacter
\[
   x \mapsto \diag(1,\dotsc,1,x,1,\dotsc,1,x^{-1},1,\dotsc,1),
\]
where $x$ is in the $i$th slot and $x^{-1}$ is in the $i^*$th slot.  Then for $n>1$, the \emph{coroots} consist of the cocharacters
\[
   (\chi_i - \chi_j)^\vee = \lambda_i - \lambda_j
   \quad\text{and}\quad
   (\chi_i + \chi_j -c)^\vee = \lambda_i + \lambda_j
\]
for $1 \leq i < j \leq n$.

\subsection{Weyl group}\label{ss:weyl_group}
The torus $T$ has \emph{normalizer} $N := N_G T$ in $G$ the algebraic group of
monomial matrices contained in $G$, and \emph{finite Weyl group}
\[
   W := W_{G,T} := N(K)/ T(K).
\]
The Weyl group $W$ acts naturally on the set of lines in $K^{2n}$ spanned by the standard ordered basis vectors, and this canonically identifies $W$ with the group $S^h_{2n}$ of permutations $\sigma$ of
$\{1,\dotsc,2n\}$ satisfying
\[
   \sigma(i^*) = \sigma(i)^* \quad \text{for all} \quad i.
\]
The group $S_{2n}^h$ decomposes as a semidirect product $\{\ZZ/2\ZZ\}^n \rtimes
S_n$, where the nontrivial element in the $i$th copy of $\ZZ/2\ZZ$ acts as the
transposition $(i,i^*)$, and where the symmetric group $S_n$ acts on
$\{1,\dotsc, n\}$ in the standard way and on $\{n+1,\dotsc,2n\}$ in the way
compatible with the display.

Note that $W$ is not the Weyl group attached to the root system
$\Phi_G$.  Rather, let
\begin{equation}\label{disp:W^circ}
   W^\circ := W_{G^\circ, T} := N_{G^\circ(F)}T(K)/T(K)
\end{equation}
denote the finite Weyl group of $T$ in $G^\circ$. Then $W^\circ \ciso W(\Phi_G)$ is naturally contained in $W$ as a subgroup of index $2$. In terms of permutations, $W^\circ$ corresponds to the elements of $S^h_{2n}$ which are \emph{even} as elements of the symmetric group $S_{2n}$.


\subsection{Fundamental group}\label{ss:fund_gp}
In terms of the identifications in \s\ref{ss:max_torus}, the \emph{coroot lattice}
\begin{equation}\label{disp:Q^vee}
   Q^\vee := Q^\vee(G,T) \subset X_*(T_\der) \subset X_*(T)
\end{equation}
consists of all $(r_1,\dotsc,r_{2n}) \in \ZZ^{2n}$ such that $r_1 + r_{2n} = \dots = r_n + r_{n+1} = 0$ and $r_1 + \dotsb + r_n$ is even.  The \emph{fundamental group} of $G$ is the fundamental group of the identity component $G^\circ$,
\[
   \pi_1(G) := \pi_1(G^\circ) := X_*(T)/Q^\vee \iso
   \ZZ/2\ZZ \oplus \ZZ.
\]
Note that the derived group $G_\der = G^\circ_\der = SO$ is not simply connected, as its fundamental group $X_*(T_\der)/Q^\vee \ciso \ZZ/2\ZZ$.

\section{Iwahori subgroup}\label{s:Iwahori}
We return to our assumptions on $K$ stated in the introduction.  In this section we discuss the standard Iwahori subgroup of $G(K)$.  In particular, we realize it as a lattice chain stabilizer. 

\subsection{Standard apartment}
Let $\B := \B(G_\ad)$ denote the building of $G_\ad$.  We call the apartment in \B associated with $T_\ad$ the \emph{standard apartment}, and we denote it by $\aaa := \aaa_{T_\ad}$.  In terms of the identifications in \s\ref{ss:max_torus},
\[
   \aaa = X_*(T_\ad) \tensor_\ZZ \RR 
      \ciso \frac{\{\, (r_1,\dotsc,r_{2n}) \in \RR^{2n} \mid r_1 + r_{2n} = \dots = r_n + r_{n+1} \,\}}{\RR \cdot (1,\dotsc,1)}.
\]

\subsection{Base alcove}
We take as our base alcove the alcove $A$ in \aaa containing the origin and contained in the \emph{negative} Weyl chamber relative to our choice of simple roots \eqref{disp:pos_coroots}.  The alcove $A$ has $n+1$ vertices
\begin{align*}
   a_0 &:= (0,\dotsc,0),\\
   a_{0'} &:= (-1,0^{(2n-2)},1),\\
   a_i &:= \bigl((-\tfrac 1 2)^{(i)},0^{(2n-2i)},(\tfrac 1 2)^{(i)}\bigr),\quad 2 \leq i \leq n-2,\\
   a_n &:= \bigl((-\tfrac 1 2)^{(n)},(\tfrac 1 2)^{(n)}\bigr),\\
   a_{n'} &:= \bigl((-\tfrac 1 2)^{(n-1)},\tfrac 1 2,-\tfrac 1 2,(\tfrac 1 2)^{(n-1)}\bigr),
\end{align*}
all taken mod $\RR\cdot (1,\dotsc,1)$.  The vertices $a_0$, $a_{0'}$, $a_n$, and $a_{n'}$ are hyperspecial; the other vertices are nonspecial.

\subsection{Standard Iwahori subgroup}  Let us say that an \emph{Iwahori subgroup of $G(K)$} is just an Iwahori subgroup of $G^\circ(K)$ in the usual sense for any connected reductive group.  We denote by $B$ the Iwahori subgroup of $G(K)$ attached to our base alcove $A$, and we call it the \emph{standard Iwahori subgroup}.

To realize $B$ as a lattice chain stabilizer, let $\lambda_\bullet$ denote the $\O_K$-lattice chain in $K^{2n}$ defined as the obvious analog of the \O-lattice chain $\Lambda_\bullet$ \eqref{disp:Lambda_i}, where $\O_K$ replaces \O and $t$ replaces $\pi$.  Let
\[
   P_{\lambda_\bullet} := \{\, g \in G(K) \mid g\lambda_i = \lambda_i \text{ for all $i$}\,\}.
\]
Then $P_{\lambda_\bullet}$ is the intersection of $G(K)$ with the standard Iwahori subgroup
\[
   \begin{pmatrix}
      \O_K^\times & & \O_K\\
        & \ddots\\
      t \O_K & & \O_K^\times
   \end{pmatrix}
\]
of $GL_{2n}(K)$.

\begin{prop}\label{st:B=P}
$B = P_{\lambda_\bullet}$.
\end{prop}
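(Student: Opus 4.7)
The plan is to verify both inclusions: one via Bruhat--Tits theory applied vertex by vertex, the other by direct matrix computation.

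For $P_{\lambda_\bullet} \subset B$: Since $B$ is, by definition, the Iwahori of $G^\circ(K)$ attached to the base alcove $A$, the task is to check that $P_{\lambda_\bullet}$ lies in $G^\circ(K)^0$ (the kernel of the Kottwitz homomorphism) and fixes $A$ pointwise. The first point is a short computation with the similitude factor and determinant: preservation of the chain together with $\wh{\lambda_i} = \lambda_{-i}$ forces $c(g) \in \O_K^\times$, and the induced action on $\lambda_0/t\lambda_0$ preserves the complete flag $0 \subset \lambda_{-2n+1}/t\lambda_0 \subset \cdots \subset \lambda_{-1}/t\lambda_0 \subset \lambda_0/t\lambda_0$, so its determinant lies in $k^\times$; combining these yields both $g \in G^\circ$ and triviality of the Kottwitz class. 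For the pointwise fixing of $A$, it suffices to fix each vertex $a_0, a_{0'}, a_i\ (2 \leq i \leq n-2), a_n, a_{n'}$. For each such vertex, the parahoric stabilizer in $G^\circ(K)$ admits a standard description as the stabilizer of a specific self-dual sub-chain of $\lambda_\bullet$ (a single self-dual lattice at the hyperspecial vertices $a_0, a_{0'}, a_n, a_{n'}$, and a symmetric pair $\{\lambda_i, \lambda_{-i}\}$ at each nonspecial $a_i$), and $P_{\lambda_\bullet}$, stabilizing the full chain, is automatically contained in each.

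For $B \subset P_{\lambda_\bullet}$: Since $T$ is split, $T(\O_K)^0 = T(\O_K)$, and $B$ is generated by $T(\O_K)$ together with the affine root subgroups $U_{\alpha+k}(K)$ for those affine roots $\alpha + k$ of $(G^\circ, T)$ nonnegative on $A$. The torus $T(\O_K)$ clearly stabilizes every $\lambda_i$, as each is spanned over $\O_K$ by rescaled standard basis vectors. For each remaining affine root subgroup, write out its matrix form in the embedding $G^\circ \hookrightarrow GL_{2n}$ using the description of $\Phi_G$ from \S\ref{ss:roots}, and verify that the corresponding elementary matrix preserves each $\lambda_i$; this amounts to a valuation check against \eqref{disp:Lambda_i}.

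I expect the main obstacle to be the matrix verification in the second step, especially for the ``long'' affine roots associated with $\pm(\chi_i + \chi_j - c)$: the corresponding root subgroups are not purely $GL$-style elementary matrices but involve the similitude character $c$ and an off-diagonal partner entry forced by the symmetric form, so some enumeration is needed to check every $\lambda_i$ is preserved. A secondary subtlety in the first step is the Kottwitz-homomorphism computation: $\pi_1(G^\circ) \iso \ZZ/2\ZZ \oplus \ZZ$ has two factors, and one must show that both the similitude contribution and the spinor/orthogonal contribution vanish on $P_{\lambda_\bullet}$.
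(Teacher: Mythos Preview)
Your outline is workable but contains one real gap, and the overall shape differs from the paper's argument.

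\textbf{The gap.} In your first inclusion you write that from $c(g)\in\O_K^\times$ and ``its determinant lies in $k^\times$'' one gets $g\in G^\circ$. As stated this fails: any $g$ stabilizing $\lambda_0$ already has $\det(g)\in\O_K^\times$, so your flag observation adds nothing in the form you record it, and knowing only that $c(g)$ and $\det(g)$ are units leaves $c(g)^{-n}\det(g)\in\{\pm 1\}$ undetermined. What your flag observation \emph{does} buy is that the reduction $\bar g\in GO_{2n}(k)$ is upper triangular in the standard basis; then the form forces $\bar g_{ii}\,\bar g_{i^*i^*}=\bar c$ for every $i$, whence $\det\bar g=\prod_i \bar g_{ii}=\bar c^{\,n}$. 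This gives $c(g)^{-n}\det(g)\equiv 1\bmod t$, and since $\operatorname{char}k\neq 2$ one concludes $g\in G^\circ$. This computation is exactly what the paper isolates as its Lemma preceding the proof. Once it is in place, $P_{\lambda_\bullet}\subset G^\circ(\O_K)\subset\ker\kappa_{G^\circ}$ is automatic (the hyperspecial parahoric sits in the Kottwitz kernel), and no separate argument for the $\ZZ/2\ZZ$ spinor factor is needed.

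\textbf{Comparison with the paper.} After the Lemma, the paper dispatches \emph{both} inclusions in one stroke by appealing to the description of $\B(G_\ad)$ via homothety classes of norms on $K^{2n}$, together with the remark that $B\subset\ker\kappa_{G^\circ}$ forces $\ord(\det g)=0$, so that stabilizing lattices up to homothety upgrades to stabilizing them outright. Your plan instead treats the two inclusions asymmetrically: vertex-by-vertex Bruhat--Tits for $P_{\lambda_\bullet}\subset B$, and an Iwahori factorization into $T(\O_K)$ and affine root groups for $B\subset P_{\lambda_\bullet}$. This is correct and more explicit, but note that your input ``the parahoric at each vertex is the stabilizer of a self-dual sub-chain of $\lambda_\bullet$'' is the same building-theoretic fact the paper invokes, so you are not avoiding Bruhat--Tits, only repackaging it. The root-subgroup verification you anticipate for $B\subset P_{\lambda_\bullet}$ is routine (the roots $\pm(\chi_i+\chi_j-c)$ indeed contribute a pair of symmetric off-diagonal entries), whereas the paper's norm argument sidesteps it entirely.
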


To prepare for the proof, recall \citelist{\cite{hrap08}*{3}\cite{btII}*{remark after 5.2.8}} that for any facet $F$ in \B, the associated parahoric subgroup $P_F$ is precisely the set of all $g \in G^\circ (K)$ with trivial Kottwitz invariant such that $ga = a$ for all vertices $a$ of $F$.  The Kottwitz homomorphism admits a simple description for any split connected reductive group $H$ with split maximal torus $S$:  it is a functorial surjective homomorphism
\[
   \kappa_H\colon H(K) \surj \pi_1(H)
\]
which is characterized in terms of the Cartan decomposition
\[
   H(K) = H(\O_K)S(K)H(\O_K)
\]
as being trivial on $H(\O_K)$ and as restricting on $S(K)$ to the composite
\[
   S(K) \surj S(K)/S(\O_K) \ciso X_*(S) \surj X_*(S)/Q_{H,S}^\vee = \pi_1(H),
\]
where $Q^\vee_{H,S}$ denotes the coroot lattice for $S$ in $H$.  In the case of our group $G^\circ$, upon choosing a splitting $G^\circ \iso SO \rtimes \GG_m$, we identify its fundamental group with $\ZZ/2\ZZ \oplus \ZZ$ in the way that $\kappa_{G^\circ}$ sends $(g,x) \mapsto \bigl(\mathop{\kappa_{SO}} (g), \ord (x)\bigr)$.

\begin{lem}
$P_{\lambda_\bullet} \subset G^\circ (K)$.
\end{lem}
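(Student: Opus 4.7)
The plan is to use the fact that $G^\circ$ is cut out in $G$ by the condition $c(g)^{-n}\det(g) = 1$, and to verify this condition for any $g \in P_{\lambda_\bullet}$ by a reduction-mod-$t$ argument, exploiting that the standard Iwahori of $GL_{2n}(K)$ (in which $P_{\lambda_\bullet}$ sits) consists of matrices whose reduction is upper triangular.

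First I would show that for $g \in P_{\lambda_\bullet}$, both $\det(g)$ and $c(g)$ lie in $\O_K^\times$. Since $g$ stabilizes $\lambda_0 = \O_K^{2n}$, it acts on $\lambda_0$ as an automorphism, so $g \in GL_{2n}(\O_K)$ and $\det(g) \in \O_K^\times$. Since $h(e_i, e_{i^*}) = 1$ and $gx, gy \in \lambda_0$ for $x,y \in \lambda_0$, the defining identity $h(gx,gy) = c(g)h(x,y)$ forces $c(g) \in \O_K$; applying the same reasoning to $g^{-1}$ (which also stabilizes $\lambda_\bullet$) gives $c(g)^{-1} \in \O_K$. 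Taking determinants in $g^T J g = c(g) J$ (with $J$ the matrix \eqref{disp:std_form}) yields $\det(g)^2 = c(g)^{2n}$, so $c(g)^{-n}\det(g) \in \{\pm 1\}$.

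Next I would reduce modulo $t$. Since $P_{\lambda_\bullet}$ is defined as the intersection of $G(K)$ with the standard Iwahori of $GL_{2n}(K)$, the reduction $\bar g \in GL_{2n}(k)$ is upper triangular. Writing $\bar g = (\bar g_{ij})$ and computing the $(i,i^*)$ entry of $\bar g^T J \bar g = \bar c \cdot J$, only the term $k=i$ survives in $\sum_k \bar g_{ki} \bar g_{k^*, i^*}$ (upper triangularity forces $k\leq i$ in one factor and $k \geq i$ in the other), so $\bar g_{ii} \bar g_{i^*, i^*} = \bar c$ for each $i$. Multiplying over the $n$ pairs $\{i, i^*\}$ gives
\[
   \det(\bar g) = \prod_{i=1}^{2n} \bar g_{ii} = \prod_{i=1}^{n} \bar g_{ii}\, \bar g_{i^*, i^*} = \bar c^{\, n},
\]
i.e.\ the reduction of $c(g)^{-n}\det(g)$ is $1 \in k^\times$. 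Since $k$ has characteristic different from $2$, the sign $c(g)^{-n}\det(g) \in \{\pm 1\}$ must itself be $+1$, so $g \in G^\circ(K)$.

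There is no real obstacle here; the only point requiring care is the bookkeeping in step three that pairs up the diagonal entries via $i \leftrightarrow i^*$ to produce exactly $c(g)^n$ rather than $-c(g)^n$, which is where upper triangularity (i.e.\ the Iwahori hypothesis, not merely the hyperspecial stabilizer) is used in an essential way.
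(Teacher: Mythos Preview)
Your proposal is correct and follows essentially the same approach as the paper's proof: both reduce modulo $t$, use that the Iwahori matrices become upper triangular so that $\det(\bar g)=\prod_i \bar g_{ii}$, pair the diagonal entries via $i\leftrightarrow i^*$ to obtain $\bar g_{ii}\bar g_{i^*i^*}=\overline{c(g)}$, and invoke $\operatorname{char}k\neq 2$ to rule out the sign $-1$. Your write-up is slightly more explicit in first establishing $c(g),\det(g)\in\O_K^\times$ and in writing out the matrix identity $g^TJg=c(g)J$, but the argument is the same.
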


\begin{proof}
Given $g\in P_{\lambda_\bullet}$, we must show that $c(g)^n = \det(g)$.  Since $\charac (k) \neq 2$ and the only other
possibility is $c(g)^n = -\det(g)$, it suffices to show $c(g)^n \equiv \det(g)
\bmod t$.

Write $g$ as a matrix $(g_{ij})$.  Since $g$ preserves the form $h$ up
to the scalar $c(g)$, the $i$th and $i^*$th columns of $g$ pair to $c(g)$
for $1 \leq i \leq 2n$.  Hence
\[
   c(g) \equiv g_{i,i}g_{i^*,i^*} \mod t .
\]
Hence 
\[
   c(g)^n \equiv \prod_{i=1}^{2n} g_{i,i} \equiv \det(g) \mod t.\qedhere
\]
\end{proof}

\begin{proof}[Proof of \eqref{st:B=P}]
Since plainly $P_{\lambda_\bullet} \subset G(\O_K)$, the lemma implies $P_{\lambda_\bullet} \subset \ker \kappa_{G^\circ}$.  On the other hand, we see from the explicit form of $\kappa_{G^\circ}$ that any $g\in B \subset \ker\kappa_{G^\circ}$ has determinant of valuation $0$.  The equality $B = P_{\lambda_\bullet}$ now follows easily from the explicit expressions for the vertices of $A$ and from the usual identification of \B with homothety classes of certain norms on $K^{2n}$.
\end{proof}

\section{Iwahori-Weyl group}\label{s:wtW}
In this section we discuss a few matters related to the Iwahori-Weyl group of $G$.  Once we specialize to the function field case later on, we'll use the Iwahori-Weyl group to index Schubert cells in the affine flag variety attached to $G$.

\subsection{Iwahori-Weyl group}
The \emph{Iwahori-Weyl group $\wt W$} of $G$ is the group
\[
   \wt W := \wt W_G := \wt W_{G,T} := N(K)/T(\O_K).
\]
We shall also need the Iwahori-Weyl group $\wt W^\circ$ of the identity component $G^\circ$,
\[
   \wt W^\circ := \wt W_{G^\circ} := \wt W_{G^\circ,T} := N_{G^\circ(K)}T(K)/T(\O_K).
\]
It will be convenient for us to single out the permutation matrix $\tau\in G(K)$ corresponding to the transposition $(n, n+1)$.  Then $\tau$ is contained in the non-identity component of $O(K)$ and normalizes $T$, so that there is a decomposition $\wt W = \wt W^\circ \amalg \tau\wt W^\circ$.\label{ss:tau}

\subsection{Affine Bruhat decomposition} 
Let $H$ be a split connected reductive $K$-group with split maximal torus $S$, and
let $I \subset H(K)$ be the Iwahori subgroup corresponding to an alcove in the apartment associated with $S$.  The \emph{affine Bruhat decomposition}
asserts that the natural map $\wt W _{H,S} := N_{H(K)}S(K)/S(\O_K) \to I \bs H(K) /
I$ sending $n \bmod S(\O_K) \mapsto I n I$ is a bijection; see Haines and Rapoport \cite{hrap08}*{8}.
In this subsection we show that the analogous result still holds for our disconnected group $G$.


\begin{prop}\label{st:wtW=double_cosets}
The natural map $\wt W \to B \bs G(K)/B$ is a bijection of sets.
\end{prop}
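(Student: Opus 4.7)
The plan is to reduce the claim to the affine Bruhat decomposition for the connected identity component $G^\circ$ (available by Haines-Rapoport as recalled in the text) and to bridge the two connected components of $G(K)$ by means of the element $\tau$ singled out in \S\ref{ss:tau}.

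The technical heart is to verify that $\tau$ normalizes $B$, that is, $\tau B\tau^{-1} = B$.  I would check this by inspecting the action of $\tau$ on the standard apartment \aaa: since $\tau$ acts by swapping the $n$th and $(n+1)$st coordinates, the explicit vertex list for the base alcove $A$ shows at once that $\tau$ fixes each of $a_0, a_{0'}, a_2, \dotsc, a_{n-2}$ and exchanges $a_n$ with $a_{n'}$.  Hence $\tau\cdot A = A$ as a subset of \aaa.  Because $\tau$ normalizes $G^\circ(K)$, conjugation by $\tau$ permutes the Iwahori subgroups of $G^\circ(K)$, and Bruhat-Tits theory identifies $\tau B\tau^{-1}$ with the Iwahori attached to the alcove $\tau\cdot A = A$, namely $B$ itself.

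Granted this, the remainder of the argument is formal.  Since $\tau$ lies in the non-identity component and $[G(K):G^\circ(K)] = 2$, one has $G(K) = G^\circ(K) \amalg \tau G^\circ(K)$, and because $B\subset G^\circ(K)$ this yields
\[
   B\bs G(K)/B = \bigl(B\bs G^\circ(K)/B\bigr) \amalg \bigl(B\bs \tau G^\circ(K)/B\bigr).
\]
The Haines-Rapoport theorem parametrizes the first factor by $\wt W^\circ$ via $w \mapsto Bn_w B$.  For the second factor, left multiplication by $\tau$ defines a bijection $G^\circ(K) \to \tau G^\circ(K)$; in view of $\tau B\tau^{-1} = B$ it descends to a well-defined bijection $B\bs G^\circ(K)/B \to B\bs \tau G^\circ(K)/B$ sending $BgB \mapsto B\tau gB$.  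Composing, one obtains $\wt W^\circ \to B\bs \tau G^\circ(K)/B$ via $w \mapsto B\tau n_w B$.  Since the natural map $\wt W \to B\bs G(K)/B$ sends $w \in \wt W^\circ$ to $Bn_wB$ and $\tau w$ (for $w \in \wt W^\circ$) to $B\tau n_w B$, the decomposition $\wt W = \wt W^\circ \amalg \tau\wt W^\circ$ exhibits this natural map as the disjoint union of the two bijections above, so it is itself a bijection.

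The principal obstacle is the verification that $\tau B\tau^{-1} = B$: since $\tau$ lies outside $G^\circ(K)$, it is not a priori clear that it should normalize an Iwahori of $G^\circ(K)$, and one must exploit the specific geometric fact that $\tau$ fixes the base alcove setwise.  One could alternatively verify this lattice-theoretically from $B = P_{\lambda_\bullet}$ in \ref{st:B=P}: the orthogonal constraint on $g \in P_{\lambda_\bullet}$ forces $g_{n,n+1}\equiv 0\pmod t$, and this in turn implies $g$ also stabilizes the twisted lattice $\tau\lambda_n$, whence $\tau B\tau^{-1} = P_{\tau\lambda_\bullet} = P_{\lambda_\bullet} = B$.
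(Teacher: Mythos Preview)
Your proposal is correct and follows essentially the same approach as the paper's own proof: decompose both $\wt W$ and $B\bs G(K)/B$ along the two $G^\circ(K)$-cosets, invoke the affine Bruhat decomposition for $G^\circ$ on the identity piece, and transport it to the other piece using that $\tau$ normalizes $B$ because $\tau$ stabilizes the base alcove $A$. The paper simply asserts that $\tau$ ``plainly stabilizes'' $A$, whereas you spell this out via the explicit vertex list and also offer an alternative lattice-theoretic check through $B = P_{\lambda_\bullet}$; these are helpful elaborations but not a different strategy.
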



\begin{proof}
This follows from the affine Bruhat decomposition for $G^\circ$.  Indeed, we have decompositions
\[
   \wt W = W^\circ \amalg \tau \wt W^\circ
\]
and
\[
   B \bs G(K) /B = \bigl(B\bs G^\circ(K) / B\bigr) \amalg \bigl(B \bs \tau G^\circ(K) / B\bigr),
\]
and we at least obtain $W^\circ \isoarrow B\bs G^\circ(K) / B$.  So it remains to show that the map $\tau\wt W^\circ \to B\bs \tau G^\circ(K) / B$ is a bijection.  Since $\tau$ plainly stabilizes the base alcove $A$, $\tau$ normalizes $B$.  Hence $B\bs \tau G^\circ(K) / B = \tau\bigl(B\bs G^\circ(K) / B\bigr)$. So we get what we need again from the affine Bruhat decomposition for $G^\circ$.
\end{proof}


\subsection{Semidirect product decompositions}\label{ss:sdirect_prods}
As usual, $\wt W$ admits two standard semidirect product decompositions, which
we now describe.

The first decomposition is 
\[
   \wt W \ciso \bigl(T(K)/T(\O_K)\bigr) \rtimes W \ciso X_*(T) \rtimes W,
\]
where we lift the finite Weyl group $W$ to $N(K)$ by choosing permutation matrices as representatives, and where we identify $T(K)/T(\O_K) \ciso X_*(T)$ as in \s\ref{ss:max_torus}.  In this way, we refer to $X_*(T)$ as the \emph{translation subgroup} of $\wt W$, and we denote the image of $\mu\in X_*(T)$ in $\wt W$ by $t_\mu$.  Concretely, in terms of our identifications for $X_*(T)$ and $W$ in \s\ref{ss:max_torus} and \s\ref{ss:weyl_group}, respectively, we have
\begin{equation}\label{disp:wtW_sdprod}
   \wt W \ciso \{\, (r_1,\dotsc,r_{2n}) \in \ZZ^{2n} \mid r_1 + r_{2n} = \dots = r_n + r_{n+1} \,\} \rtimes S^h_{2n}.
\end{equation}

The second decomposition involves the \emph{affine Weyl group $W_\aff$} of $G$. In terms of our first semidirect product decomposition, we have $W_\aff := Q^\vee \rtimes W^\circ \subset \wt W$, where we recall $Q^\vee \subset X_*(T)$ is the coroot lattice \eqref{disp:Q^vee} and $W^\circ \subset W$ is the finite Weyl group of $G^\circ$ \eqref{disp:W^circ}.  Then
\begin{itemize}
\item
   $W_\aff$ is a normal subgroup of $\wt W$; and
\item
   $W_\aff$ is canonically identified with the affine Weyl group of the root system $(\Phi_{G_\ad}, X_*(T_\ad)\tensor \RR)$, so that $W_\aff$ acts simply transitively on the set of alcoves in the standard apartment.
\end{itemize}
Hence $\wt W$ is the semidirect product of $W_\aff$ and the stabilizer $\Omega$
of the base alcove $A$,
\[
   \wt W \ciso W_\aff \rtimes \Omega.
\]


We remark that, in contrast with the analogous situation for a connected reductive group, the quotient $\wt W/W_\aff \ciso \Omega$ is \emph{nonabelian}.  Indeed, we have an identification $\wt W/W_\aff \ciso X_*(T)/Q^\vee \rtimes W/W^\circ$; and the point is that $W/W^\circ$ is nontrivial and  acts nontrivially on $X_*(T)/Q^\vee$.  To see this, recall the cocharacters $\mu_1$, $\mu_2 \in X_*(T)$ from \eqref{disp:mu's}; these yield distinct dominant minuscule coweights for $G_\ad$.  Hence
$\mu_1$ and $\mu_2$ have distinct images in $X_*(T)/Q^\vee$.  But $W/W^\circ
\ciso \ZZ/2\ZZ$ is generated by the image of $\tau$, and the action of $\tau$ on $X_*(T)$ interchanges $\mu_1$ and $\mu_2$.

\subsection{Length, Bruhat order}\label{ss:bo}
The decomposition $\wt W \ciso W_\aff \rtimes \Omega$ furnishes a \emph{length function} and \emph{Bruhat order} on $\wt W$ in the standard way, which we briefly recall. The reflections through the walls of the base alcove form a generating set for the Coxeter group $W_\aff$. Hence we get a length function $\ell$ and Bruhat order $\leq$ on $W_\aff$. These then extend to $\wt W$ as usual: for $x\omega$, $x'\omega' \in \wt W$ with $x$, $x'\in W_\aff$ and $\omega$, $\omega'\in\Omega$, we have $\ell(x\omega) := \ell(x)$ and $x\omega \leq x'\omega'$ exactly when $\omega = \omega'$ and $x\leq x'$ in $W_\aff$.

We remark now that, in the function field case, the Bruhat order gives the correct closure relations for Schubert varieties in the affine flag variety; see \eqref{st:BO_closure_relns} below.


\subsection{\texorpdfstring{$\mu$}{mu}-admissible set}\label{ss:mu_adm}
Let $\mu \in X_*(T)$ be a cocharacter.  Then we define the \emph{$\mu$-admissible set $\Adm(\mu) \subset \wt W$} in the most obvious way based on the usual definition for connected groups,
\[
   \Adm(\mu) := \{\, w\in\wt W \mid w \leq \sigma t_\mu \sigma^{-1} 
	                 \text{ for some } \sigma \in W \,\}.
\]
Of course, we in fact have $\Adm(\mu) \subset \wt W^\circ$.

In the case of a connected group, all elements of the $\mu$-admissible set are congruent mod $W_\aff$ since $\wt W/ W_\aff$ is abelian; in fact, as shown by Rapoport \cite{rap05}*{3.1}, this common element in $\wt W/ W_\aff$ depends only on the geometric conjugacy class of $\mu$.  In the case of our disconnected group $G$, we have already seen that $\wt W/W_\aff$ is nonabelian.  And indeed, it can happen that $\Adm(\mu)$ possesses elements that are \emph{distinct} mod $W_\aff$.  For example, this is the case for $\mu = \mu_1$ \eqref{disp:mu's}, since $\tau\mu_1\tau^{-1} = \mu_2$.

To make this a bit more precise, consider
\[
   \Adm^\circ(\mu) := \{\, w\in\wt W \mid w \leq \sigma t_\mu \sigma^{-1} 
	                 \text{ for some } \sigma \in W^\circ \,\},
\]
the \emph{admissible set of $\mu$ in $G^\circ$}.  Then for any $\mu$,
\[
   \Adm(\mu) = \Adm^\circ(\mu) \cup \Adm^\circ(\tau\mu\tau^{-1}).
\]
Hence the study of admissible sets for $G$ reduces to the study of admissible
sets for $G^\circ$.  We see from this last display that $\Adm(\mu)$ contains
either $1$ or $2$ elements mod $W_\aff$; the union is disjoint precisely in the
latter case.

\subsection{Extended alcoves}\label{ss:extd_alcs}
We conclude the section by giving a combinatorial description of $\wt W$ in terms of \emph{extended alcoves} that will be convenient later on when we consider Schubert cells in the affine flag variety.  Identifying $\wt W \ciso \wt W_{GSp_{2n}}$ as in \s\ref{ss:bo}, our description will be the same as that for $\wt W_{GSp_{2n}}$ given by Kottwitz and Rapoport in \cite{kottrap00}*{4.2}, except we shall adopt some slightly different conventions to make the relation with the affine flag variety clearer.  Following the notation of \cite{kottrap00}, given $v \in \ZZ^{2n}$, we write $v(i)$ for the $i$th entry of $v$, and we write $\Sigma v$ for the sum of the entries of $v$.  We write $v \geq w$ if $v(i) \geq w(i)$ for all $i$.

An \emph{extended alcove} for $G$ is a sequence $v_0,\dotsc$, $v_{2n-1}$ of elements in $\ZZ^{2n}$ such that, putting $v_{2n} := v_0 - (1,\dotsc,1)$,
\begin{enumerate}
\renewcommand{\theenumi}{A\arabic{enumi}}
\item
   $v_0 \geq v_1 \geq \dotsb \geq v_{2n}$;
\item
   $\Sigma v_i = \Sigma v_{i-1} - 1$ for all $1 \leq i \leq 2n$; and
\item\label{it:dual_cond}
   there exists $d\in \ZZ$ such that $v_{i}(j) + v_{2n-i}(j^*) = d$ for all $1 \leq i,j \leq 2n$.
\end{enumerate}
We frequently refer to \eqref{it:dual_cond} as the \emph{duality condition}.  The sequence of elements $\omega_i := \bigl((-1)^{(i)},0^{(2n-i)}\bigr)$ is an extended alcove, with $d = -1$, which we call the \emph{standard extended alcove}.  The group $\wt W$ acts naturally on extended alcoves via its expression in \eqref{disp:wtW_sdprod}.  Just as in \cite{kottrap00}*{4.2}, this action is simply transitive, and we identify $\wt W$ with the set of extended alcoves by taking the standard extended alcove as base point.

\section{Affine flag variety}\label{s:aff_flag_vty}
In this section we discuss a few basic aspects of the affine flag variety attached to $G$ in the function field case.  We take $K = k((t))$ and $\O_K = k[[t]]$ from now on.  We follow closely \cite{paprap09}*{\s\s3.1--3.2}.

\subsection{Affine flag variety}
We recall the construction of the affine flag variety over
$k$.

To begin, the \emph{loop group} $LG$ is the functor on $k$-algebras
\[
   LG\colon R \mapsto G\bigl(R((t))\bigr),
\]
where $R((t))$ is the ring of Laurent series with coefficients in $R$, regarded
as a $K$-algebra in the obvious way.

Next recall the standard Iwahori subgroup $B \subset G(K)$.  Abusing notation, we denote again by $B$ the associated Bruhat-Tits scheme over $\O_K$; this is a smooth affine group scheme with generic fiber $G^\circ$ and with connected special fiber.  We denote by $L^+B$ the functor on $k$-algebras
\[
   L^+B \colon R \mapsto B\bigl(R[[t]]\bigr),
\]
where $R[[t]]$ is regarded as an $\O_K$-algebra in the obvious way.

Finally, the \emph{affine flag variety $\F$} is the fpqc quotient $LG/L^+B$ of
sheaves on the category of $k$-algebras. It is an ind-$k$-scheme of ind-finite
type \cite{paprap08}*{1.4}.  Note that \F is a disjoint union of two copies of the affine flag variety $\F^\circ := LG^\circ/L^+B$ for $G^\circ$,
\[
   \F = \F^\circ \amalg \tau \F^\circ,
\]
with $\tau \in G(K)$ the element of \s\ref{ss:tau}.

\subsection{Lattice-theoretic description}\label{ss:latt_desc}
In this subsection we describe points on the affine flag variety in terms of certain lattice chains in $K^{2n}$.  Let $R$ be a $k$-algebra.  Recall that an \emph{$R[[t]]$-lattice in $R((t))^{2n}$} is an $R[[t]]$-submodule $L \subset R((t))^{2n}$ which is free as an $R[[t]]$-module Zariski-locally on $\Spec R$, and such that the natural arrow $L \tensor_{R[[t]]} R((t)) \to R((t))^{2n}$ is an isomorphism.  Borrowing our earlier notation, given an $R[[t]]$-lattice $L$, we write $\wh L$ for the dual lattice
\[
   \wh L := \{\, x \in R((t))^{2n} \mid h_{R((t))}(L,x) \subset R[[t]] \, \},
\]
where $h_{R((t))} := h \tensor_K R((t))$ is the induced form on $R((t))^{2n}$.   We say that an indexed sequence
\[
   \dotsb \subset L_{-1} \subset L_0 \subset L_1 \subset \dotsb
\]
of lattices in $R((t))^{2n}$ is an \emph{indexed chain} if all successive quotients are locally free $R$-modules.  We say that an indexed chain $L_\bullet$ is \emph{periodic} if $t L_i = L_{i-2n}$ for all $i$, and \emph{complete} if all successive quotients are locally free $R$-modules of rank $1$.

We define $\F'$ to be the functor on the category of $k$-algebras that sends each algebra $R$ to the set of all complete periodic indexed lattice chains $L_\bullet$ in $R((t))^{2n}$ with the property that Zariski-locally on $\Spec R$, there exists a scalar $\alpha \in R((t))^\times$ such that $\wh L_i = \alpha L_{-i}$ for all $i$.  The natural action of $G\bigl(R((t))\bigr)$ on $R((t))^{2n}$ yields an action of $LG$ on $\F'$.  Taking the standard chain $\lambda_\bullet \in \F'(k)$ as base point, we obtain a map $LG \to \F'$ which induces, quite as in  \cite{paprap09}*{\s3.2},%
\footnote{Though note that the scalar $\alpha$ in the definition of $\F_I$ in \cite{paprap09} should only be required to exist Zariski-locally, so that $\F_I$ satisfies the sheaf property.}
an $LG$-equivariant isomorphism
\[
   \F \isoarrow \F'.
\]
We shall always identify \F and $\F'$ in this way.

%
%

\subsection{Schubert cells and varieties}
In this subsection we discuss Schubert cells and varieties in the affine flag variety.  For $w\in\wt W$, the associated \emph{Schubert cell $C_w$} is the reduced
$k$-subscheme 
\[
   C_w := L^+B \cdot \dot{w} \subset \F,
\]
where $\dot w$ is any representative of $w$ in $G(K)$.  The associated \emph{Schubert variety $S_w$} is the reduced closure of $C_w$ in \F.  Since $L^+ B \subset LG^\circ$, every Schubert cell and variety is contained entirely in $\F^\circ$ or entirely in $\tau\F^\circ$.  By \eqref{st:wtW=double_cosets}, $\wt W$ is in bijective correspondence with the set of Schubert cells in \F.  We have $\dim C_w = \dim S_w = \ell(w)$.

\subsection{Closure relations between Schubert cells}\label{ss:closure_relns}
We now discuss closure relations between Schubert cells in \F.  In the case of a connected reductive group over $K$, closure relations between Schubert cells correspond exactly to the Bruhat order in the Iwahori-Weyl group.  Our aim here is to show that this statement carries over to our disconnected group $G$.

\begin{prop}\label{st:BO_closure_relns}
Let $w$, $w' \in \wt W$.  Then $w \leq w'$ in the Bruhat order $\iff$ $S_w
\subset S_{w'}$ in \F.
\end{prop}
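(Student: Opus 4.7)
The plan is to reduce to the standard correspondence between Bruhat order and Schubert variety closures for the connected reductive group $G^\circ$, exploiting the decompositions $\wt W = \wt W^\circ \amalg \tau \wt W^\circ$ and $\F = \F^\circ \amalg \tau \F^\circ$ together with the fact that $\tau$ stabilizes the base alcove $A$. First I would dispose of the case where $w$ and $w'$ lie in different components of $\wt W$: under $\wt W \ciso W_\aff \rtimes \Omega$, comparable elements must have the same $\Omega$-component by the definition of the extended Bruhat order in \S\ref{ss:bo}, but the images of $\wt W^\circ$ and $\tau \wt W^\circ$ in $\wt W/W_\aff \ciso \Omega$ are disjoint, so $w \not\leq w'$. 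Simultaneously $S_w$ and $S_{w'}$ lie in different connected components of $\F$, so $S_w \not\subset S_{w'}$ since Schubert varieties are nonempty; thus both sides of the equivalence vacuously agree.

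Next, when $w, w' \in \wt W^\circ$, both Schubert varieties lie in $\F^\circ = LG^\circ/L^+B$, the affine flag variety for the split connected reductive group $G^\circ$ with respect to its Iwahori subgroup $B$, and the proposition becomes the known closure-relation result for connected reductive groups alluded to in \S\ref{ss:closure_relns}.

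The remaining case is $w, w' \in \tau \wt W^\circ$. I would write $w = \tau v$ and $w' = \tau v'$ with $v, v' \in \wt W^\circ$. Because $\tau$ fixes $A$, conjugation by $\tau$ is a length-preserving automorphism of the Coxeter group $W_\aff$ permuting its simple affine reflections; a direct inspection of the extended Bruhat order on $\wt W = W_\aff \rtimes \Omega$ then gives $w \leq w' \iff v \leq v'$. Geometrically, left multiplication by $\tau$ normalizes $L^+B$ (because $\tau$ normalizes $B$) and induces an $L^+B$-equivariant isomorphism $\F^\circ \isoarrow \tau\F^\circ$ carrying $S_v$ onto $S_{\tau v}$, so $S_w \subset S_{w'} \iff S_v \subset S_{v'}$, reducing this case to the connected one applied to $v, v'$. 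The only real obstacle is some mild bookkeeping around $\tau$: verifying that $\tau$-conjugation acts as a Coxeter automorphism of $W_\aff$ and that left translation by $\tau$ preserves the $L^+B$-orbit structure on $\F$. Both facts follow immediately from $\tau \in \Omega$ and the fact that $\tau$ normalizes $B$.
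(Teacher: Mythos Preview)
Your proof is correct and follows essentially the same approach as the paper: both reduce to the known connected case via the decompositions $\wt W = \wt W^\circ \amalg \tau\wt W^\circ$ and $\F = \F^\circ \amalg \tau\F^\circ$, using that $\tau$ stabilizes $A$ (hence left multiplication by $\tau$ respects Bruhat order) and normalizes $B$ (hence left multiplication by $\tau$ on $\F$ carries Schubert cells to Schubert cells). Your treatment is slightly more explicit in spelling out why left multiplication by $\tau$ preserves Bruhat order---via the Coxeter automorphism induced by conjugation on $W_\aff$---and in handling the vacuous cross-component case, but the argument is the same.
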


\begin{proof}
We reduce to the analogous statement for $G^\circ$, using the decompositions $\wt W = \wt W^\circ \amalg \tau \wt W^\circ$ and $\F = \F^\circ \amalg \tau\F^\circ$. 
Let $w$, $w'\in\wt W$.  Then for $w$ and $w'$ to be related in the Bruhat order on the one hand, and for $S_w$ and $S_{w'}$ to be contained both in $\F^\circ$ or both in $\tau\F^\circ$ on the other hand, we must at least have $w \equiv w' \bmod \wt W^\circ$.  So we suppose this is the case.

If $w$, $w'\in \wt W^\circ$, then the conclusion follows at once from the lemma for $G^\circ$. If $w$, $w'\in \tau \wt W^\circ$, then we observe that
\begin{itemize}
\item
   the left-multiplication-by-$\tau$ map $\wt W^\circ \isoarrow \tau\wt W^\circ$ respects the Bruhat order, since $\tau$ stabilizes $A$; and
\item
   the left-multiplication-by-$\tau$ map $\F^\circ \isoarrow \tau\F^\circ$ respects Schubert cells, since $\tau$ normalizes $L^+B$ in $LG$.
\end{itemize}
So the conclusion in this case follows again from the statement for $G^\circ$.
\end{proof}

\section{Embedding the special fiber in the affine flag variety}\label{s:embedding}
In this section we embed the special fiber of $M^\naive$ into the affine flag variety \F.  

\subsection{The map}
We write $M^\naive_k := M^\naive \tensor_\O k$.  The embedding $M^\naive_k \inj \F$ we wish to construct will make use of the lattice-theoretic description of \F from \s\ref{ss:latt_desc}.  We first note that the $\O_K$-lattice chain $\lambda_\bullet$ admits a ``trivialization'' in obvious analogy with \eqref{disp:latt_triv}, where $\lambda_i$ replaces $\Lambda_i$, $\O_K$ replaces \O, and $t$ replaces $\pi$.  Then this trivialization together with  \eqref{disp:latt_triv} itself and the canonical identifications $\O/\pi\O \ciso k \ciso \O_K/t\O_K$ yields an identification of chains of $k$-vector spaces
\[\tag{$*$}\label{disp:chain_isom}
   \Lambda_\bullet \tensor_\O k \ciso \lambda_\bullet \tensor_{\O_K} k.
\]

To define $M^\naive_k \inj \F$, suppose we have an $R$-point $\{\F_i \inj \Lambda_i \tensor_\O R\}$ of $M^\naive_k$ for some $k$-algebra $R$.  Let $L_i \subset \lambda_i\tensor_{\O_K} R[[t]]$ be the submodule rendering the diagram
\[
   \xymatrix{
      L_i \ar@{^{(}->}[r] \ar@{->>}[d] & \lambda_i \tensor_{\O_K} R[[t]] \ar@{->>}[d]\\
      \F_i \ar@{^{(}->}[r] & \Lambda_i \tensor_\O R\ciso (\lambda_i \tensor_{\O_K} k) \tensor_k R
   }
\]
Cartesian, where the identification in the bottom right corner is made via \eqref{disp:chain_isom}.  Then the $L_i$'s form an indexed $R[[t]]$-lattice chain
\[
   L_0 \subset L_1 \subset \dotsb \subset L_{2n-1} \subset t^{-1}L_0
\]
in $R((t))^{2n}$.  The chain extends periodically to an $R$-point of \F (we may globally take the scalar $\alpha$ discussed in \s\ref{ss:latt_desc} to equal $t^{-1}$), which we take to be the image of our original $R$-point of $M^\naive_k$.  It is clear that $M^\naive_k \inj \F$ is then a monomorphism, and, as $M^\naive$ is proper, the map is a closed immersion.  From now on, we frequently identify $M_k^\naive$ with its image in \F.

\subsection{The image of the special fiber}
Let $R$ be a $k$-algebra.  It is clear from the definition of the map $M_k^\naive \inj \F$ that the image of $M^\naive_k(R)$ in $\F(R)$ consists precisely of all complete periodic self-dual chains $L_\bullet$ in $\F(R)$ such that, for all $i$,
\begin{itemize}
\item
   $\lambda_{i,R[[t]]} \supset L_i \supset t\lambda_{i,R[[t]]}$, where $\lambda_{i,R[[t]]} := \lambda_i \tensor_{\O_K} R[[t]]$; and
\item
   the $R$-module $\lambda_{i,R[[t]]}/L_i$ is locally free of rank $n$ for all $i$.
\end{itemize}

It is clear from this that the action of $L^+B$ on \F preserves the closed subschemes $M^\naive_k$ and $M^\spin_k$. We deduce that \emph{the underlying topological spaces of $M^\naive_k$ and $M^\spin_k$ are unions of Schubert varieties in \F.}  One of our essential goals for the rest of the paper is to obtain a good description of the Schubert varieties that occur in $M^\spin_k$.

\subsection{Schubert varieties in \texorpdfstring{$M_k^\naive$}{M\^{}naive}\_k}\label{ss:Sch_vties_Mnaive}
As a preliminary step towards describing the Schubert varieties $S_w$ that occur in $M_k^\spin$, in this subsection we translate the condition that $S_w$ be contained in the image of $M^\naive_k$ in \F into a condition on the extended alcove $v_0,\dotsc,v_{2n-1}$ attached to $w \in \wt W$ (\s\ref{ss:extd_alcs}).

Upon inspecting definitions, the previous subsection makes plain that $S_w$ is contained in $M^\naive_k$ $\iff$
\begin{enumerate}
\renewcommand{\theenumi}{P\arabic{enumi}}
\item\label{it:ineq_cond}
   $\omega_i \leq v_i \leq \omega_i + (1,\dotsc,1)$ for all $0\leq i \leq 2n-1$; and
\item\label{it:size_cond}
   $\Sigma v_0 = n$.
\end{enumerate}
We say that such a $w$ is \emph{$GL$-permissible}.  If $w$ is $GL$-permissible, then necessarily $d=0$ in the duality condition \eqref{it:dual_cond}, and it follows from the duality condition that the inequalities in \eqref{it:ineq_cond} hold for all $i$ as soon as they hold for all $0 \leq i \leq n$.  The condition that $w$ be $GL$-permissible is exactly the condition that it be permissible in $\wt W_{GL_{2n}}$ relative to the cocharacter $(1^{(n)},0^{(n)})$, or that, modulo conventions, its associated extended alcove be minuscule of size $n$ in the terminology of \cite{kottrap00}.

Given a $GL$-permissible $w$, the point $w\cdot \lambda_\bullet$ in $\F(k)$ corresponds to a point $\{\F_i \subset \Lambda_i \tensor_\O k\}$ in $M_k^\naive(k)$ of a rather special sort:  namely, identifying $\Lambda_i \tensor_\O k$ with $k^{2n}$ via \eqref{disp:latt_triv}, we have
\begin{enumerate}
\renewcommand{\theenumi}{T}
\item\label{disp:T_fixed}
   $\F_i$ is spanned by standard basis vectors in $k^{2n}$ for all $i$.
\end{enumerate}
On the other hand, for any point $\{\F_i\}$ in $M_k^\naive(k)$, let us say that $\{\F_i\}$ is a \emph{$T$-fixed point} if it satisfies \eqref{disp:T_fixed}; it is easy to check that the $T$-fixed points are exactly the points in $M_k^\naive(k)$ fixed by $L^+T(k)$.  In this way, we get a bijection between the $GL$-permissible $w\in \wt W$ and the $T$-fixed points in $M_k^\naive(k)$.

The $T$-fixed point $\{\F_i^w\}$ associated with a $GL$-permissible $w$ is easy to describe in terms of the extended alcove $v_0,\dotsc,v_{2n-1}$.  Indeed, let
\begin{equation}\label{disp:mu_i}
   \mu^w_i := v_i - \omega_i, \qquad 0 \leq i \leq 2n-1.
\end{equation}
Then $\mu_i^w$ is a vector in $\ZZ^{2n}$ having $n$ entries equal to $0$ and $n$ entries equal to $1$, and
\begin{equation}\label{disp:F_i}
   \F_i^w = \sum_{\mu^w_i(j) = 0} k \epsilon_j \subset k^{2n},
\end{equation}
where $\epsilon_1,\dotsc,\epsilon_{2n}$ is the standard ordered basis in $k^{2n}$.

\subsection{\texorpdfstring{$T$}{T}-fixed points in \texorpdfstring{$M_k^\spin$}{M\^{}spin\_k}}\label{ss:coord_pts}
By the previous subsection, every Schubert cell in \F contained in $M^\naive_k$ contains a unique $T$-fixed point in $M^\naive_k$.  So to understand which Schubert cells are contained in $M^\spin_k$, we need to understand which $T$-fixed points satisfy the spin condition. This is the object of this subsection.

We begin by fixing some notation.  We continue to write $e_1,\dotsc,e_{2n}$ for the standard basis in $V$ and $\epsilon_1,\dotsc,\epsilon_{2n}$ for the standard basis in $k^{2n}$, and we identify $\Lambda_i$ with $\O^{2n}$, and hence $\Lambda_i \tensor k$ with $k^{2n}$, via \eqref{disp:latt_triv}.  Quite generally, for any subset $E \subset \{1,\dotsc,2n\}$, we define
\[
   kE := \sum_{j\in E} k\epsilon_j \subset k^{2n}.
\]
When $E$ has cardinality $n$, consider the wedge product, in increasing index order, of the $n$ standard basis vectors in $\O^{2n}$ indexed by the elements of $E$; we denote by $e_E^i \in\bigwedge^n_F V$ the image of this element under the map $\bigwedge_\O^{n} \O^{2n} \isoarrow \bigwedge_\O^n \Lambda_i \subset \bigwedge^n_F V$.  When $i = 0$, we have $e^0_E = e_E$ \eqref{disp:e_E}.


Now let $\{\F_i \subset k^{2n}\}$ be a $T$-fixed point in $M_k^\naive(k)$.  For each $i$, let $E_i \subset \{1,\dotsc,2n\}$ be the subset of indices $j$ such that $\epsilon_j \in \F_i$, so that $\F_i = kE_i$ and $\F_i^\perp = kE_i^\perp$ \eqref{disp:E^*_E^perp}.
%
To understand the spin condition for the $\F_i$'s, we need to get a good handle on the elements $e^i_{E_i}$ and $e^i_{E_i^\perp}$.  More precisely, let
\[
   d_i := \#(E_i \cap \{1,\dotsc,i\}) \quad\text{and}\quad d_i^\perp := \#(E_i^\perp \cap \{1,\dotsc,i\}).
\]
Then, referring again to \eqref{disp:e_E},
\[
   e^i_{E_i} = \frac 1{\pi^{d_i}} e_{E_i}
   \quad\text{and}\quad
   e^i_{E_i^\perp} = \frac 1 {\pi^{d_i^\perp}} e_{E_i^\perp},
\]
and we need to understand the integer $d_i^\perp - d_i$.

To proceed, we'll consider pairs of the form $(i,2n - i)$ simultaneously, so that we may assume $0 \leq i \leq n$.  Let
\[
   A_i := \{1,\dotsc,i, i^*,\dotsc,2n\}
   \quad\text{and}\quad
   B_i := \{i+1,\dotsc,2n-i\},
\]
so that we get an orthogonal decomposition $k^{2n} = kA_i \oplus kB_i$.  Since $\F_n$ is totally isotropic, $E_i$ cannot contain any pair of the form $j$, $j^*$ with $j \leq i$.  Hence we may write $A_i$ as a disjoint union
\[
   A_i = R_i \amalg S_i,
\]
where
\[
   R_i := \{\, j \in A_i \mid \text{exactly one of $j$, $j^*$ is in $E_i$}\,\}
   \quad\text{and}\quad
   S_i := \{\, j \in A_i \mid j,\ j^* \notin E_i\,\}.
\]
Plainly, the sets $R_i$ and $S_i$ have even cardinalities, say equal to $2r_i$ and $2s_i$, respectively.  We have
\[
   E_i \cap A_i = E_i \cap R_i
   \quad\text{and}\quad
   E_i^\perp \cap A_i = (E_i^\perp \cap R_i) \amalg S_i = (E_i \cap R_i) \amalg S_i.
\]
Hence
\[
   \#(E_i\cap A_i) = r_i \quad\text{and}\quad \#(E_i^\perp\cap A_i) = r_i + 2s_i.
\]

We now need a couple of lemmas.

\begin{lem}\label{st:tot_istrp}
For $1 \leq i \leq n$, the image of $\F_i$ in $\F_{2n-i}$ under the structure maps is totally isotropic.
\end{lem}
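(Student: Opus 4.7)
The plan is to derive the isotropy claim by combining the duality condition $\F_i^\perp = \F_{\wh\Lambda_i}$ (item (LM3)) with the periodicity condition $\F_{\pi\Lambda} = \pi\F_\Lambda$ (item (LM2)), and then translating the resulting identity into the $R^{2n}$ bases of $\Lambda_{i,R}$ and $\Lambda_{2n-i,R}$ furnished by \eqref{disp:latt_triv}. Specifically, (LM3) gives $h(\F_i, \F_{-i}) = 0$ via the perfect pairing $\Lambda_{i,R} \otimes_R \Lambda_{-i,R} \to R$ (recalling $\wh\Lambda_i = \Lambda_{-i}$); and (LM2), applied to $\Lambda = \Lambda_{2n-i}$ with $\pi\Lambda = \Lambda_{-i}$, identifies $\F_{-i}$ with $\pi \cdot \F_{2n-i}$ inside $\Lambda_{-i,R} = \pi\Lambda_{2n-i,R}$. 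Combining yields the key identity $\pi h(\F_i, \F_{2n-i}) = 0$, where $\pi h \colon \Lambda_i \otimes_\O \Lambda_{2n-i} \to \O$ denotes the $\O$-bilinear pairing obtained by scaling $h$ by $\pi$ (it is integral because $h$ takes values in $\pi^{-1}\O$ on this tensor).

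A direct case analysis in the $R^{2n}$ bases coming from \eqref{disp:latt_triv} shows that the reduction $\pi h_k$ coincides with the standard form $h_k(\epsilon_j, \epsilon_{j'}) = \delta_{j^*,j'}$ on $k^{2n}$: when both $j \leq i$ and $j' \leq 2n-i$, the factor $\pi \cdot \pi^{-2}$ produces $\pi^{-1}\delta_{j^*,j'}$, but here $j^* \geq i^* > 2n-i \geq j'$ forces $\delta_{j^*,j'} = 0$; the two ``crossed'' cases contribute $\delta_{j^*,j'}$ directly; and the remaining case contributes $\pi\delta_{j^*,j'}$, which vanishes mod $\pi$. In the same bases the structure map $\Lambda_{i,R} \to \Lambda_{2n-i,R}$ is simply $\epsilon_j \mapsto \epsilon_j$ for $j \in A_i$ and $\epsilon_j \mapsto 0$ for $j \in B_i$, so the image $\bar\F_i$ of $\F_i$ in $\F_{2n-i}$ corresponds to $k(E_i \cap A_i)$ at a $T$-fixed point.

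Finally, the vanishing $\pi h_k(\F_i, \F_{2n-i}) = 0$ applied to basis elements forces $E_i^* \cap E_{2n-i} = \emptyset$, while the inclusion $\bar\F_i \subset \F_{2n-i}$ translates to $E_i \cap A_i \subset E_{2n-i}$. Thus if $j$ and $j^*$ both lay in $E_i \cap A_i$, we would have $j \in E_i$ and $j^* \in E_{2n-i}$, contradicting $E_i^* \cap E_{2n-i} = \emptyset$. Hence $E_i \cap A_i$ contains no pair $\{j,j^*\}$, which is exactly the statement that $\bar\F_i$ is totally isotropic under the standard form on $k^{2n}$. The main technical hurdle will be the Gram-matrix identification of $\pi h_k$ with $\delta_{j^*,j'}$ above: routine case-checking, but requiring some care because the $\pi^{-1}$-rescalings in \eqref{disp:latt_triv} occur in different coordinates for $\Lambda_i$ versus $\Lambda_{2n-i}$, and one must verify that the extra factor of $\pi$ interacts cleanly with them.
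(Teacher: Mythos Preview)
Your argument is correct, but it is considerably more elaborate than what the paper does, and most of your work re-derives something already stated. The concrete description of $M^\naive$ at the end of \S2.2 records precisely that, in the trivialized bases of \eqref{disp:latt_triv}, the condition (LM3) becomes $\F_{2n-i} = \F_i^\perp$ with respect to the standard split form on $k^{2n}$. Your Gram-matrix computation of $(\pi h)_k$ is exactly a proof of this identification, so you are redoing what the paper has already packaged.

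Once $\F_{2n-i} = \F_i^\perp$ is in hand, the paper's proof is a single line: since $\F_i$ is spanned by standard basis vectors, the structure map (which kills $kB_i$ and is the identity on $kA_i$) sends $\F_i$ into $\F_i \cap kA_i \subset \F_i$; but the image also lies in $\F_{2n-i} = \F_i^\perp$, so it lies in $\F_i \cap \F_i^\perp$, which is tautologically totally isotropic. Your final paragraph is the index-set translation of this same observation: $E_i \cap A_i \subset E_{2n-i}$ together with $E_i^* \cap E_{2n-i} = \emptyset$ (i.e., $E_{2n-i} = E_i^\perp$) forbids any pair $\{j,j^*\}$ in $E_i \cap A_i$. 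So the two arguments coincide at the end; the difference is that you spend the bulk of your proof re-establishing $\F_{2n-i} = \F_i^\perp$ rather than quoting it.
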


\begin{proof}
Since $\F_i$ is spanned by standard basis vectors, the image in question is contained in $\F_i \cap \F_{2n-i} = \F_i \cap \F_i^\perp$.
\end{proof}

The $i = 0$ version of \eqref{st:tot_istrp} is simply the statement that $\F_0 = \F_0^\perp$.

\begin{lem}
$\#(E_i\cap A_i) \leq i$.
\end{lem}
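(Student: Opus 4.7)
The plan is to read off the inequality directly from the decomposition of $A_i$ that has just been set up, together with a simple count of $|A_i|$. First I would observe that $A_i = \{1,\dotsc,i\} \cup \{i^*,\dotsc,2n\} = \{1,\dotsc,i\} \cup \{2n+1-i,\dotsc,2n\}$ is the disjoint union of two sets of size $i$, so $|A_i| = 2i$; moreover, the involution $j \mapsto j^*$ partitions $A_i$ into $i$ pairs $\{j,j^*\}$ with $j \leq i$.

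The key input, already established in the paragraph preceding the lemma statement, is that $E_i$ contains no such pair $\{j, j^*\}$ with $j \leq i$: this is the consequence of the total isotropy of $\F_n$ noted there (the chain map $\F_i \to \F_n$ kills $\epsilon_{i+1},\dotsc,\epsilon_n$ and preserves $\epsilon_j$ for $j \in A_i$, so if both $\epsilon_j$ and $\epsilon_{j^*}$ lay in $\F_i$ for some $j \leq i$ they would survive in $\F_n$ and pair nontrivially). This is precisely what allows the clean partition $A_i = R_i \amalg S_i$, where each $j \mapsto j^*$ pair contributes either to $R_i$ (exactly one element in $E_i$) or to $S_i$ (neither element in $E_i$), but never fully to $E_i$.

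Counting, the partition gives $\#R_i + \#S_i = 2r_i + 2s_i = |A_i| = 2i$, hence $r_i + s_i = i$. Since $\#(E_i \cap A_i) = r_i$ was already recorded, I conclude $\#(E_i \cap A_i) = r_i \leq r_i + s_i = i$. There is no real obstacle here: all the substantive work is in the preceding structural analysis (especially Lemma~\ref{st:tot_istrp} and the resulting pair-avoidance property of $E_i$), and the lemma itself is a bookkeeping consequence. Equivalently, one may view the bound as saying that a totally isotropic subspace of the hyperbolic subspace $kA_i$ of dimension $2i$ has dimension at most $i$, applied to the image of $\F_i$ in $\F_{2n-i}$, which equals $k(E_i \cap A_i)$.
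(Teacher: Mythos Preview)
Your proof is correct, and the final sentence is essentially the paper's own argument. The paper argues geometrically: it identifies $\F_i \cap kA_i$ with the image of $\F_i$ in $\F_{2n-i}$, invokes Lemma~\ref{st:tot_istrp} to see this image is totally isotropic, and then applies the standard bound $\dim \leq \tfrac12 \dim kA_i = i$ for isotropic subspaces of a nondegenerate form. You instead read the inequality off combinatorially from the partition $A_i = R_i \amalg S_i$ already in place, using only the pair-avoidance fact recorded just before the lemma (which, as your own explanation correctly says, follows from total isotropy of $\F_n$ via the chain map $\F_i \to \F_n$; note this is \emph{not} a consequence of Lemma~\ref{st:tot_istrp}, contrary to your parenthetical remark near the end). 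Your route is marginally more self-contained in that it does not invoke Lemma~\ref{st:tot_istrp} at all; the paper's is more conceptual and makes the role of the quadratic form explicit. Either way the content is the same: each of the $i$ pairs $\{j,j^*\}$ in $A_i$ contributes at most one element to $E_i$.
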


\begin{proof}
The intersection $\F_i \cap kA_i$ is the precisely the image of $\F_i$ in $\F_{2n-i}$ under the structure maps, hence is totally isotropic by the previous lemma.  Since the form on $k^{2n}$ restricts to a nondegenerate form on $kA_i$, we conclude
\[
   \#(E_i\cap A_i) = \dim \F_i \cap kA_i \leq \tfrac 1 2 \dim kA_i = i. \qedhere
\]
\end{proof}

The lemma leaves us with two cases to consider.

\emph{Case 1: $\#(E_i\cap A_i) < i$.}  Then $S_i \neq \emptyset$.  Hence
\[
   d_i^\perp - d_i = s_i > 0.
\]
Hence by \eqref{disp:spin_basis}, we have
\[
   e^i_{E_i} \pm \pi^{d_i^\perp-d_i} \sgn(\sigma_{E_i}) e^i_{E_i^\perp} \in
      \Bigl(\sideset{}{_\O^n}{\bigwedge} \Lambda_i\Bigr)_\pm,
\]
and the image of this element under the map $\bigl(\bigwedge_\O^n \Lambda_i\bigr)_\pm \tensor k \to \bigwedge_k^n \Lambda_i \tensor k$ spans the line $\bigwedge_k^n\F_i$.  Moreover, it is easy to check that $d_{2n-i}^\perp - d_{2n-i} = s_i$ as well, so that we similarly conclude $\bigwedge_k^n\F_{2n-i} \subset \im\bigl[\bigl(\bigwedge_\O^n \Lambda_{2n-i}\bigr)_\pm \tensor k \to \bigwedge_k^n \Lambda_{2n-i} \tensor k\bigr]$.

\emph{Case 2: $\#(E_i\cap A_i) = i$.}  We claim $E_i = E_i^\perp$, that is, $\F_i$ is a (maximal) totally isotropic subspace of $k^{2n}$.  Indeed, in this case $\F_i \cap kA_i$ is maximal totally isotropic in $kA_i$, and it suffices to show that $\F_i \cap kB_i$ is totally isotropic of dimension $n - i$.  For this, consider the structure map $f\colon \F_{2n-i} \to \F_i$.  Then $\im f$ is plainly contained in $k B_i$ and is totally isotropic by the argument in \eqref{st:tot_istrp}.  So it suffices, in turn, to show that $\ker f = \F_{2n-i} \cap k A_i$ has dimension $i$.  But
%
\[
   \F_{2n-i} \cap kA_i \subset (\F_i \cap kA_i)^\perp \cap kA_i = \F_i \cap kA_i,
\]
where the equality in the display follows from our case assumption, and the reverse inclusion $\F_i \cap kA_i \subset \F_{2n-i} \cap kA_i$ is trivial.
The claim follows.  We deduce that $e_{E_i}^i$ and $e_{E_{2n-i}}^{2n-i} = e_{E_i}^{2n-i}$ are scalar multiples of each other; and as in \cite{paprap09}*{\s7.1.4}, both are contained in the one of the submodules $\bigl(\bigwedge_\O^n \Lambda_i\bigr)_\pm$.

We obtain the following.
\begin{prop}\label{st:spin_equiv_conds}
Let $\{\F_i \subset \Lambda_i \tensor k\}$ be a $T$-fixed point in $M^\naive(k)$.  The following are equivalent.
\begin{enumerate}
\renewcommand{\theenumi}{\roman{enumi}}
\item\label{it:spin_cond}
   $\{\F_i \subset \Lambda_i \tensor k\}$ satisfies the spin condition.
\item\label{it:OGr_comp}
   Upon identifying the $\Lambda_i\tensor k$'s with $k^{2n}$ via \eqref{disp:latt_triv}, all the $\F_i$'s for $0 \leq i \leq n$ which are totally isotropic in $k^{2n}$ specify points on the same connected component of the orthogonal Grassmannian $\OGr(n,2n)$.
\item\label{it:codim_cond}
   Under the above identifications, whenever $\F_i$ and $\F_{i'}$ for $0 \leq i, i' \leq n$ are totally isotropic in $k^{2n}$, $\F_i \cap \F_{i'}$ has even codimension in $\F_i$ and $\F_{i'}$.
\item\label{it:Wcirc_cond}
   The sets $E_i$ for $0 \leq i \leq n$ for which $E_i = E_i^\perp$ are all $W^\circ$-conjugate under the natural action of $W^\circ$ on $\{1,\dotsc,2n\}$.
\end{enumerate}
\end{prop}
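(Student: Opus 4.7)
The plan is to leverage the Case~1 and Case~2 analysis carried out directly before the proposition to establish (i)$\Leftrightarrow$(ii), and then invoke standard structural facts about the orthogonal Grassmannian for the other equivalences. Concretely, Case~1 shows that whenever $\F_i$ is not totally isotropic, the line $\bigwedge^n_k \F_i$ lies simultaneously in the image of $\bigl(\bigwedge^n_\O \Lambda_i\bigr)_+ \tensor k$ and in the image of $\bigl(\bigwedge^n_\O \Lambda_i\bigr)_- \tensor k$, so such indices impose no constraint on the choice of sign. Case~2 shows that when $\F_i$ is totally isotropic, $\bigwedge^n_k \F_i$ lies in the image of exactly one of $\bigl(\bigwedge^n_\O \Lambda_i\bigr)_\pm \tensor k$. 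So the spin condition for the $T$-fixed point amounts to: as $i$ ranges over the totally isotropic indices, the selected sign is always the same.

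To promote this to (i)$\Leftrightarrow$(ii), I identify the two eigenspaces $\bigl(\bigwedge^n V\bigr)_\pm$ with the two linear spans of the Pl\"ucker images of the two connected components of $\OGr(n,2n)$. Since the operator $a$ in \cite{paprap09}*{\S7} is essentially the Hodge star for the split form $h$ (up to the combinatorial sign $\sgn(\sigma_E)$), and since maximal isotropic subspaces of split $k^{2n}$ are self-dual under the Hodge star with an eigenvalue depending only on the component they lie on, the ``sign" forced in Case~2 is exactly the component label of $\F_i \in \OGr(n,2n)$. Thus the spin condition coincides with the requirement that all the totally isotropic $\F_i$ ($0 \leq i \leq n$ suffices by the duality condition) lie on a common component.

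The equivalence (ii)$\Leftrightarrow$(iii) is the classical criterion for the two connected components of the even orthogonal Grassmannian: two maximal isotropic subspaces $U, U' \subset k^{2n}$ lie on the same component iff $\dim(U \cap U') \equiv n \pmod 2$, equivalently iff $U \cap U'$ has even codimension in each of $U$ and $U'$. The equivalence (ii)$\Leftrightarrow$(iv) is a Weyl-group computation: the $T$-fixed points of $\OGr(n,2n)$ are precisely the coordinate subspaces $kE$ with $E = E^\perp$; the full Weyl group $W = S^h_{2n}$ acts transitively on them; and the element $\tau$ corresponding to the transposition $(n,n+1)$ swaps the two components of $\OGr(n,2n)$. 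Since $W = W^\circ \amalg \tau W^\circ$, the $W^\circ$-orbits on $T$-fixed points are exactly the intersections of the $T$-fixed set with the two components.

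The main obstacle is the step (i)$\Leftrightarrow$(ii): all of the genuine computation has already been carried out in the Cases~1 and~2 that precede the proposition, so what remains is the essentially formal identification of the $\pm 1$ eigenspaces of $a$ with the two spinor/Pl\"ucker half-spin components of $\OGr(n,2n)$. Once that identification is in hand, (ii)$\Leftrightarrow$(iii) and (ii)$\Leftrightarrow$(iv) are standard and require no further input beyond the descriptions in \S\ref{ss:weyl_group} and \S\ref{ss:tau} of $W^\circ$, $W$, and $\tau$.
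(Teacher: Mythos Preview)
Your proposal is correct and follows essentially the same route as the paper. The paper's proof is a terse three lines: (i)$\Leftrightarrow$(ii) is declared ``already explained'' by the preceding Case~1/Case~2 analysis together with \cite{paprap09}*{\S7.1.4}; (ii)$\Leftrightarrow$(iii) is likewise referred to \cite{paprap09}*{\S7.1.4}; and (ii)$\Leftrightarrow$(iv) is deduced from transitivity of the orthogonal group on $\OGr$ and the fact that $\tau$ swaps components---exactly your Weyl-group argument. Your Hodge-star remark simply unpacks what the paper outsources to \cite{paprap09}.
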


\begin{proof}
\eqref{it:spin_cond} $\Longleftrightarrow$ \eqref{it:OGr_comp} has already been explained.  \eqref{it:OGr_comp} $\Longleftrightarrow$ \eqref{it:codim_cond} is explained in \cite{paprap09}*{\s7.1.4}.  \eqref{it:OGr_comp} $\Longleftrightarrow$ \eqref{it:Wcirc_cond} is clear from the facts that the orthogonal group acts transitively on $\OGr$, and that the element $\tau$ (\s\ref{ss:tau}) interchanges the two components.
\end{proof}

\subsection{Schubert varieties in \texorpdfstring{$M_k^\spin$}{M\^{}spin\_k}}\label{ss:Sch_vties_Mspin}
We now use the previous subsection to express the condition that the Schubert variety $S_w$ attached to $w \in \wt W$ is contained in $M_k^\spin$.  Continuing from \s\ref{ss:Sch_vties_Mnaive}, we shall express this condition in terms of the extended alcove $v_0,\dotsc,v_{2n-1}$ attached to $w$.

Let $w$ be $GL$-permissible.  Then the condition we wish to formulate can be essentially read off from \eqref{st:spin_equiv_conds}.  Recall the vector $\mu_i^w$ \eqref{disp:mu_i} and the subspace $\F^w_i \subset k^{2n}$ \eqref{disp:F_i}.  We say $\mu^w_i$ is \emph{totally isotropic} if $\mu_i(j) = 1 - \mu_i(j^*)$ for all $j$, or equivalently if $\F_i^w$ is totally isotropic in $k^{2n}$.  It is now immediate from our considerations of $T$-fixed points and from \eqref{st:spin_equiv_conds} that $S_w$ is contained in $M_k^\spin$ $\iff$ $w$ is $GL$-permissible and, in addition, satisfies
\begin{enumerate}
\renewcommand{\theenumi}{P3}
\item\label{it:mu_spin_cond}
   (spin condition) the vectors $\mu_i^w$ for $0 \leq i \leq n$ which are totally isotropic are all $W^\circ$-conjugate.
\end{enumerate}

The following trivial reformulation of \eqref{it:mu_spin_cond} is sometimes convenient.  Borrowing our notation from the previous subsection, let $E_i^w \subset \{1,\dotsc,2n\}$ be the subset
\begin{equation}\label{disp:E_i}
   E_i^w := \{\,j \mid \mu_i(j) = 0\,\}.
\end{equation}
We say $E_i^w$ is \emph{totally isotropic} if $E_i^w = (E_i^w)^\perp$, or equivalently if $\mu_i^w$ is totally isotropic.  Then for $GL$-permissible $w$, condition \eqref{it:mu_spin_cond} is equivalent to
\begin{enumerate}
\renewcommand{\theenumi}{P3$'$}
\item\label{it:E_spin_cond}
   (spin condition$'$) The sets $E_i^w$ for $0 \leq i \leq n$ which are totally isotropic are all $W^\circ$-conjugate.
\end{enumerate}

We say that $w\in \wt W$ is \emph{spin-permissible} if it satisfies \eqref{it:ineq_cond}, \eqref{it:size_cond}, and \eqref{it:mu_spin_cond}, or equivalently \eqref{it:ineq_cond}, \eqref{it:size_cond}, and \eqref{it:E_spin_cond}.  It follows from the duality condition \eqref{it:dual_cond} that for $GL$-permissible $w$, the vectors $\mu_0^w$ and $\mu_n^w$ are always totally isotropic; but in general, even for spin-permissible $w$, the possibilities can range from these two being the only totally isotropic vectors to all the all $\mu_i^w$'s being totally isotropic.

It is useful to formulate a slight refinement of the notion of spin-permissible.  There are exactly two orbits for the action of $W^\circ$ on the set of totally isotropic vectors with $n$ entries equal to $0$ and $n$ entries equal to $1$, namely
\[
   W^\circ \mu_1 \quad\text{and}\quad W^\circ \mu_2,
\]
where $\mu_1 = (1^{(n)},0^{(n)})$ and $\mu_2 = (1^{(n-1)},0,1,0^{(n-1)})$ are the cocharacters of \eqref{disp:mu's}.  For $j = 1$, $2$, we say that $w$ is \emph{$\mu_j$-spin-permissible} if $w$ is $GL$-permissible and $\mu_i^w \in W^\circ \mu_j$ whenever $\mu_i^w$ is totally isotropic.  We write $\Permsp(\mu_j)$ for the set of $\mu_j$-spin-permissible elements in $\wt W$.  Thus the set of spin-permissible elements in $\wt W$ is the disjoint union $\Permsp(\mu_1) \amalg \Permsp(\mu_2)$.  We shall see in \s\ref{ss:perm_spin-perm} that for $j = 1$, $2$, $\Permsp(\mu_j)$ is precisely the \emph{$\mu_j$-permissible set} defined by Kottwitz and Rapoport \cite{kottrap00}.

\subsection{Topological flatness of \texorpdfstring{$M^\spin$}{M\^{}spin}}
We now come to the main result of the paper.  We again recall the dominant minuscule cocharacters $\mu_1$ and $\mu_2$ for $G$ from \eqref{disp:mu's}, and for any cocharacter $\mu \in X_*(T)$, we recall the $\mu$-admissible sets $\Adm(\mu)$ and $\Adm^\circ(\mu)$ from \s\ref{ss:mu_adm}.  Let $\A(\mu)$ denote the reduced union of Schubert varieties $\bigcup_{w\in\Adm^\circ\mu}S_w$ in the affine flag variety.

\begin{thm}\label{st:main_result}
\hfill
\begin{enumerate}
\renewcommand{\theenumi}{\roman{enumi}}
\item\label{it:Adm(mu)=P}
   $\Adm^\circ(\mu_1) = \Permsp(\mu_1)$ and $\Adm^\circ(\mu_2) = \Permsp(\mu_2)$.  In particular, the set $\Adm(\mu_1) = \Adm(\mu_2)$ equals $\Permsp(\mu_1) \amalg \Permsp(\mu_2)$.
\item\label{it:special_fiber}
   The underlying topological space of the special fiber $M^\spin_k$ coincides with $\A(\mu_1) \amalg \A(\mu_2)$ in \F.  In particular, $M^\spin_k$ has two isomorphic connected components, and the irreducible components of these are in respective bijective correspondence with $W^\circ \mu_1$ and $W^\circ \mu_2$.
\item\label{it:top_flat}
   The underlying topological space of $M^\spin$ is the closure of the generic fiber in $M^\naive$.  In particular, $M^\spin$ is topologically flat.
\end{enumerate}
\end{thm}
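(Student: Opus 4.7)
The plan is to isolate the combinatorial identity in (i) as the technical crux and then deduce (ii) and (iii) from it via the Schubert-cell framework of \s\ref{s:embedding}. Parts (ii) and (iii) are comparatively formal given (i).

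For (i), I would treat the two inclusions separately. The inclusion $\Adm^\circ(\mu) \subset \Permsp(\mu)$ is the easier direction: direct inspection of extended alcoves shows that $t_{\sigma\mu}$ is $\mu$-spin-permissible for every $\sigma \in W^\circ$, since its associated extended alcove satisfies $\mu_i^{t_{\sigma\mu}} = \sigma\mu \in W^\circ\mu$ for all $i$; and one then verifies that $\mu$-spin-permissibility is preserved along Bruhat covers by a covering-relation analysis, using condition \eqref{it:Wcirc_cond} of Proposition~\ref{st:spin_equiv_conds} to see that the $W^\circ$-orbit invariant assigned to a spin-permissible $w$ is constant on a Schubert variety once it is set at the top. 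The reverse inclusion $\Permsp(\mu) \subset \Adm^\circ(\mu)$ is the substantial one: given a $\mu$-spin-permissible $w$, one must produce $\sigma \in W^\circ$ with $w \leq t_{\sigma\mu}$ in the Bruhat order. I would adapt Kottwitz and Rapoport's inductive reduction from the $GL_n$ setting in \cite{kottrap00}, iteratively applying affine reflections that strictly decrease a suitable defect statistic measuring the failure of $w$ to be a translation. The new subtlety in the orthogonal case is twofold: the conjugating element $\sigma$ must remain in $W^\circ$ (an even permutation), and the totally isotropic profile $\{\,i : E_i^w = (E_i^w)^\perp\,\}$ together with its $W^\circ$-orbit data must be respected throughout the reduction. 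I expect this to be the main technical obstacle of the paper.

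Granting (i), part (ii) is almost formal. By \s\ref{ss:Sch_vties_Mspin}, $|M^\spin_k|$ is $L^+B$-stable, hence a union of Schubert cells $C_w$; and $C_w \subset M^\spin_k$ if and only if $w$ is spin-permissible, i.e., $w \in \Permsp(\mu_1) \amalg \Permsp(\mu_2)$. By (i) and Proposition~\ref{st:BO_closure_relns}, taking closures yields $|M^\spin_k| = \A(\mu_1) \cup \A(\mu_2)$. Disjointness of the two pieces follows from Proposition~\ref{st:spin_equiv_conds}\eqref{it:Wcirc_cond}, since the $W^\circ$-orbit invariant is locally constant on $M^\spin_k$; and the maximal Schubert varieties in $\A(\mu_j)$ are the $S_{t_{\sigma\mu_j}}$ for $\sigma$ running through coset representatives giving $W^\circ\mu_j$, which yields the stated enumeration of irreducible components.

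For (iii), the inclusion $M^\loc \subset M^\spin$ is tautological, since $M^\spin$ is a closed subscheme of $M^\naive$ containing the common generic fiber. For the reverse topological inclusion, by (ii) it suffices to show that each maximal Schubert variety $S_{t_{\sigma\mu_j}}$ lies in the closure of the generic fiber. Using the lattice-theoretic description, I would construct for each such $\sigma$ and $j$ an explicit $\O$-point of $M^\spin$: take $\tilde\F_i \subset \Lambda_i$ to be the $\O$-direct summand spanned by the standard basis vectors indexed by $E_i^{t_{\sigma\mu_j}}$ (cf.~\eqref{disp:E_i}). Its reduction mod $\pi$ is the $T$-fixed point of $C_{t_{\sigma\mu_j}}$ in the special fiber, while its generic fiber is totally isotropic and thus lies in the $W^\circ\mu_j$-component of $\OGr(n,2n)_F$, verifying the spin condition throughout. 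Since $M^\loc$ is the closure of the generic fiber in $M^\naive$ and hence $L^+B$-stable, the closure of the $L^+B$-orbit of this $\O$-point lies in $M^\loc$ and has special fiber containing $C_{t_{\sigma\mu_j}}$, hence its reduced closure $S_{t_{\sigma\mu_j}}$. Exhausting over $\sigma$ and $j$ and invoking (ii) yields $|M^\spin_k| \subset |M^\loc_k|$, completing the proof.
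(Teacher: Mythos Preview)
Your overall architecture matches the paper's: isolate (i) as the crux, deduce (ii) from (i) and \S\ref{ss:Sch_vties_Mspin}, and prove (iii) by lifting the $T$-fixed points $t_{\sigma\mu_j}$ to explicit $\O$-points. The hard inclusion $\Permsp(\mu)\subset\Adm^\circ(\mu)$ is indeed the content of \S\ref{s:adm_perm_sets}, carried out exactly as you anticipate by adapting the Kottwitz--Rapoport reflection argument while tracking the $W^\circ$-orbit data.

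Two places where the paper proceeds differently, and more simply:

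\emph{Easy inclusion in (i).} You propose to check directly that $\mu$-spin-permissibility is preserved under Bruhat covers. The paper sidesteps any such combinatorics: since $M^\spin_k$ is a closed subscheme of $\F$ and the Bruhat order encodes closure relations (Proposition~\ref{st:BO_closure_relns}), the set $\Permsp(\mu_j)$ is automatically closed in the Bruhat order. So one only has to observe that it contains the maximal elements $t_{\sigma\mu_j}$, $\sigma\in W^\circ$, which is immediate. Your route would work but essentially reproduces the reflection analysis of \S\ref{ss:refl_spin} in the downward direction.

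\emph{Part (iii).} You invoke $L^+B$-stability of $M^\loc_k$ to spread the lifted $T$-fixed point out to the full Schubert cell. This is true, but your justification (``$M^\loc$ is the closure of the generic fiber, hence $L^+B$-stable'') is imprecise: $L^+B$ is a $k$-group and does not act on $M^\loc$ over $\O$; one needs the Bruhat--Tits group scheme over $\O$ acting on $M^\naive$ to make this rigorous. The paper instead appeals to a lemma of G\"ortz \cite{goertz05}*{Lemma 2}, which requires in addition a dimension check: one computes $\ell(t_\mu)=\binom{n}{2}=\dim M^\spin_F$ via the Iwahori--Matsumoto formula, so each irreducible component of $M^\spin_k$ has the same dimension as the generic fiber. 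Your approach avoids this computation at the cost of importing the group-scheme action; the paper's approach is self-contained given the setup already in place.
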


\begin{proof}
Assertion \eqref{it:special_fiber} follows immediately from \eqref{it:Adm(mu)=P} and \s\ref{ss:Sch_vties_Mspin}.  To prove \eqref{it:special_fiber}, we must show that the irreducible components of $M^\spin_k$ are in the closure of the generic fiber $M^\spin_F = M^\naive_F$ in $M^\naive$.  This follows from \eqref{it:special_fiber} by a more-or-less standard lifting argument.  By a lemma of G\"ortz \cite{goertz05}*{Lemma 2}, it suffices to show that each irreducible component in $M^\spin_k$
\begin{enumerate}
\item\label{it:dim_cond}
   has dimension equal to the dimension of $M^\spin_F$; and
\item\label{it:lift_cond}
   contains a closed point which is contained in no other irreducible component and which lifts to the generic fiber.
\end{enumerate}
For \eqref{it:dim_cond}, for $\mu \in W^\circ \mu_1 \cup W^\circ \mu_2$, one readily computes from the formula of Iwahori-Matsumoto \cite{iwamat65}*{Proposition 1.23}
\[
   \dim S_{t_\mu} 
      = \ell(t_\mu) 
      = \sum_{\substack{\text{positive}\\
                        \text{roots $\alpha$}}} |\langle \mu, \alpha\rangle |
      = \binom n 2.
\]
On the other hand, it is well-known that $M^\spin_F \ciso \OGr(n,2n)_F$ has dimension $\binom n 2$.  For \eqref{it:lift_cond}, for each $\mu \in W^\circ \mu_1 \cup W^\circ \mu_2$, we just take the $T$-fixed point $\{\F_i^{t_\mu} \subset k^{2n}\} \in M^\spin(k)$ attached to $\mu$ itself.  Then the $\F_i^{t_\mu}$'s are all equal and spanned by the standard basis vectors $\epsilon_j$ for which $\mu(j) = 0$, and we have the obvious lift $\bigl\{\wt \F_i^{t_\mu} \subset \O^{2n}\bigr\} \in M^\spin(\O)$ where $\wt \F_i^{t_\mu}$ is the span of the corresponding standard basis vectors in $\O^{2n}$ for all $i$.

It remains to prove \eqref{it:Adm(mu)=P}.  To prove the containments $\Adm^\circ(\mu_j) \subset \Permsp(\mu_j)$ for $j = 1$, $2$, we first note that since $M^\spin_k$ is closed in \F and the Bruhat order reflects closure relations between Schubert varieties \eqref{st:BO_closure_relns}, $\Permsp(\mu_j)$ is closed in the Bruhat order. Hence it suffices to show that $\Permsp(\mu_j)$ contains the maximal elements of $\Adm^\circ(\mu_j)$, that is, the $W^\circ$-conjugates of $t_{\mu_j}$ in $\wt W$, which is obvious. 

We are left to prove the containments $\Adm^\circ(\mu_j) \supset \Permsp(\mu_j)$ for $j = 1$, $2$ in \eqref{it:Adm(mu)=P}.  This is the main object of \s\ref{s:adm_perm_sets}.
\end{proof}

\section{Admissible, permissible, and spin-permissible sets}\label{s:adm_perm_sets}

Let $\mu \in \{\mu_1,\mu_2\}$.  In this section we complete the proof of part \eqref{it:Adm(mu)=P} of \eqref{st:main_result} by showing that $\Permsp(\mu) \subset \Adm^\circ(\mu)$.  In essence, this amounts to working through the argument of Kottwitz-Rapoport \cite{kottrap00}*{\s5} in the case of the orthogonal similitude group.  In the last subsection \s\ref{ss:perm_spin-perm}, we show that the notion of $\mu$-spin-permissibility (\s\ref{ss:Sch_vties_Mspin}) agrees with the notion of $\mu$-permissibility from \cite{kottrap00}.

\subsection{Strategy}
Our strategy for proving $\Permsp(\mu) \subset \Adm^\circ(\mu)$ is, in the large, the same strategy Kottwitz and Rapoport used to prove the analogous assertion for $GL_r$.  Namely, let $w \in \Permsp(\mu)$.  Then the asserted containment holds $\iff$ $w$ is a translation element, or $w$ is not a translation element and we can find a reflection $s \in W_\aff$ such that $sw \in \Permsp(\mu)$ and $sw > w$ in the Bruhat order.  In the $GL$ case, when $w$ is not a translation element, Kottwitz and Rapoport found an explicit affine root $\alpha$ such that the associated reflection had the desired properties.  Since every affine root for $G$ is the restriction of an affine root for $GL_{2n}$, we can approach the problem in our case in the following way: regarding $w$ as an element in $\wt W_{GL_{2n}}$, we can take the affine root $\alpha$ prescribed by Kottwitz and Rapoport, attempt to restrict $\alpha$ to the maximal torus $T$ in $G$, and then take the corresponding reflection in $W_\aff$.  Two problems arise.
\begin{enumerate}
\item
   $\alpha$ may not restrict to an affine root of $G$.
\item
   Even when $\alpha$ does restrict to an affine root of $G$ with associated reflection $s_\alpha$, although one can show that $s_\alpha w > w$ and that $s_\alpha w$ satisfies \eqref{it:ineq_cond} and \eqref{it:size_cond}, \emph{$s_\alpha w$ need not satisfy \eqref{it:mu_spin_cond}.}
\end{enumerate}

It turns out that the first problem is quite easy to overcome.  But the second is more serious and leads us to a more complicated case analysis than that encountered in \cite{kottrap00}.

\subsection{Reflections}
Consider the affine linear function
\[
   \alpha_{i,j;d}\colon
   \xymatrix@R=0ex{
      X_*(T) \ar[r] & \ZZ\vphantom{X_*(T)}\\
      (x_1,\dotsc,x_{2n})\vphantom{x_i - x_j - d} \ar@{|->}[r] & x_i - x_j - d \vphantom{(x_1,\dotsc,x_{2n})}
   }
\]
for $i < j$ and $d\in\ZZ$.  Then $\alpha := \alpha_{i,j;d}$ is an affine root of $(G,T)$ precisely when $j \neq i^*$, and up to sign, all affine roots are obtained in this way.  Plainly $\alpha_{i,j;d} = \alpha_{j^*,i^*;d}$.  Attached to $\alpha$ is the reflection $s_\alpha = s_{i,j;d} \in W_\aff$ which acts on $X_*(T) \tensor \RR$ by sending $(x_1,\dotsc,x_n)$ to the tuple with $x_j + d$ in the $i$th slot, $x_i -d$ in the $j$th slot, $x_{i^*} + d$ in the $j^*$th slot, $x_{j^*} - d$ in the $i^*$th slot, and all other slots the same; visually, in the case $i < j < j^* < i^*$,
\begin{multline*}
   (\dotsc,x_i,\dotsc,x_j,\dotsc,x_{j^*},\dotsc,x_{i^*}\dotsc)
   \overset{s_\alpha}{\mapsto}\\
   (\dotsc,x_j+d,\dotsc,x_i-d,\dotsc,x_{i^*} + d,\dotsc,x_{j^*}-d,\dotsc).
\end{multline*}

If $w\in\wt W$ has extended alcove $v_0,\dotsc,v_{2n-1}$, then $s_\alpha w$ has extended alcove $s_\alpha v_0,\dotsc,s_\alpha v_{2n-1}$.

\subsection{The set \texorpdfstring{$K_m$}{K\_m}}\label{ss:K_m}
Fix a $GL$-permissible $w \in \wt W$, and recall the vector $\mu_k^w$ for $0 \leq k \leq 2n-1$ from \eqref{disp:mu_i}.  As in \cite{kottrap00}, for $1 \leq m \leq 2n$, we define $K_m \subset \{0,\dotsc,2n-1\}$ to be the subset
\[
   K_m := \{\, k \mid \mu_k^w(m) = 0\,\}.
\]

Just as in \cite{kottrap00}*{5.4--5.5}, and in the notation and terminology used there, the set $K_m$ is either empty, all of $\{0,\dotsc,2n-1\} \ciso \ZZ/2n\ZZ$, or an \emph{interval} in $\ZZ/2n\ZZ$ of the form $[\wt m,m)$ for some $\wt m \neq m$; in this last case, we say that $m$ is \emph{proper} and that $K_m$ has \emph{lower endpoint} $\wt m$ and \emph{upper endpoint} $m$.  For proper $m$, we always denote by $\wt m$ the lower endpoint of $K_m$.  Of course, the lower endpoint $\wt m \in \ZZ/2n\ZZ$ is characterized by the property
\[
   \wt m \in K_m \quad\text{and}\quad \wt m - 1 \notin K_m.
\]
When $m$ is proper, $\wt m$ is evidently proper too, and we have the simple formula
\begin{equation}\label{disp:E_fmla}
   E_{\wt m}^w = (m, \wt m) \cdot E_{\wt m - 1}^w,
\end{equation}
where $(m, \wt m)$ is the transposition interchanging $m$ and $\wt m$ and the set $E_{\wt m}^w$ is defined in \eqref{disp:E_i}.  Plainly, the function $m \mapsto \wt m$ defines a fixed-point-free bijection from the set of proper elements in $\ZZ/2n\ZZ$ to itself.  Note that, asymmetrically, we embed $w$ into our notation for $E^w_i$ but suppress $w$ in our notation for $K_m$.

The duality condition \eqref{it:dual_cond} may be expressed in terms of the $E^w$'s as
\[
   m \in E_k^w \iff m^* \notin E_{2n-k}^w
\]
and in terms of the $K$'s as
\[
   k \in K_m \iff 2n-k \notin K_{m^*}.
\]
Hence $K_{m^*} = -K_m^c$ for all $m$ as subsets of $\ZZ/2n\ZZ$.  Hence if $K_m$ is an interval $[\wt m,m)$, then $K_{m^*}$ is the interval $[(\wt m)^*,m^*)$; in particular, $(m^*)\sptilde = (\wt m)^*$.  Moreover, $m$ fails to be proper exactly when $K_m = \ZZ/2n\ZZ$ and $K_{m^*} = \emptyset$, or $K_m = \emptyset$ and $K_{m^*} = \ZZ/2n\ZZ$.

\subsection{Reflections and \texorpdfstring{$GL$}{GL}-permissibility}\label{ss:refl_GL-perm}
Suppose $w\in\wt W$ is $GL$-permissible.  In this subsection we determine the affine roots $\alpha$ such that $s_\alpha w$ is again $GL$-permissible.  As usual, we denote by $v_0,\dotsc,v_{2n-1}$ the extended alcove attached to $w$.

Recall from \s\ref{ss:Sch_vties_Mnaive} that $w$ is $GL$-permissible $\iff$ $\Sigma v_0 = n$ and $\omega_k \leq v_k \leq \omega_k + (1^{(2n)})$ for all $0 \leq k \leq 2n-1$.  Hence for $\alpha = \alpha_{i,j;d}$ with $i < j \neq i^*$, the element $s_\alpha w$ is $GL$-permissible $\iff$
\[\tag{$*$}\label{disp:GL2n_perm_conds}
   \left.
   \begin{gathered}
      v_k(j) + d - \omega_k(i) \in \{0,1\}\\
      v_k(i) - d - \omega_k(j) \in \{0,1\}\\
      v_k(i^*) + d - \omega_k(j^*) \in \{0,1\}\\
      v_k(j^*) - d - \omega_k(i^*) \in \{0,1\}
   \end{gathered}\:
   \right\}
   \quad\text{for all $0 \leq k \leq 2n-1$.}
\]
By the duality condition, the last two containments in \eqref{disp:GL2n_perm_conds} hold for all $k$ $\iff$ the first two hold for all $k$.

It is convenient to express the conditions in \eqref{disp:GL2n_perm_conds} in terms of the sets $[i,j)$, $K_i$, and $K_j$.  For any subset $S \subset \{0,\dotsc,2n-1\}$, let $\chi_S$ denote the characteristic function of $S$.  Then for all $k$ and $m$,
\[
   \chi_{[i,j)}(k) = \omega_k(j) - \omega_k(i) 
   \quad\text{and}\quad
   \chi_{K_m}(k) = 1 - \mu_k^w(m) = 1 -v_k(m) + \omega_k(m).
\]
Hence we may rewrite the first two conditions in \eqref{disp:GL2n_perm_conds} as
\[
   \chi_{K_j}(k) - \chi_{[i,j)}(k) - d \in \{0,1\}
   \qquad\text{and}\qquad
   \chi_{K_i}(k) + \chi_{[i,j)}(k) + d \in \{0,1\}
\]
for all $0 \leq k \leq 2n-1$.  Similarly to \cite{kottrap00}*{5.2}, either of these last two conditions implies that $d$ equals $0$ or $-1$.  We similarly conclude from the two conditions together that for $d = 0$,
\[
   \text{$s_{i,j;0}w$ is $GL$-permissible} \iff [i,j) \subset K_i^c \cap K_j,
\]
and that for $d = -1$,
\[
   \text{$s_{i,j;-1}w$ is $GL$-permissible} \iff [i,j)^c \subset K_i \cap K_j^c.
\]

The following is a convenient reformulation of the above discussion.

\begin{lem}\label{st:GL-perm_cond}
Let $i$, $j \in \{1,\dotsc,2n\}$ with $j \neq i$, $i^*$.  Then
\[
   \begin{varwidth}{\textwidth}
      \centering
      either $i < j$ and $s_{i,j;0} w$ is $GL$-permissible,\\
      or $j < i$ and $s_{j,i;-1}$ is $GL$-permissible
   \end{varwidth}
   \iff
   i \in K_j \text{ and } j-1 \notin K_i.
\]
\end{lem}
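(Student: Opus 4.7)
The plan is to derive the lemma directly from the two characterizations of $GL$-permissibility recorded in the paragraph immediately preceding it, combined with the description of each $K_m$ as being either empty, all of $\ZZ/2n\ZZ$, or a cyclic interval $[\wt m, m)$ (\S\ref{ss:K_m}). First I will observe that the two cases of the lemma reduce to a single cyclic containment. Indeed, for $i < j$ the condition for $s_{i,j;0}w$ reads $[i,j) \subset K_i^c \cap K_j$, while for $j < i$ the condition for $s_{j,i;-1}w$ reads $[j,i)^c \subset K_j \cap K_i^c$; since $[j,i)^c = [i,j)$ when the interval is interpreted cyclically in $\ZZ/2n\ZZ$, both cases amount to the single requirement that the interval $[i,j)$ (taken cyclically when $j < i$) is contained in $K_i^c \cap K_j$.

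The main step is then to show that this containment is equivalent to the pair of endpoint conditions $i \in K_j$ and $j-1 \notin K_i$. The forward direction is immediate, since $i$ and $j-1$ are the left and right endpoints of $[i,j)$ respectively. For the converse, I will exploit the interval structure of $K_m$: the set $K_j$, if proper, equals $[\wt j, j)$ and is characterized as ending just before $j$, so as soon as $i \in K_j$ the sub-interval $[i, j)$ fits entirely inside $K_j$. Symmetrically, $K_i^c$, if proper, is the cyclic interval $[i, \wt i)$ starting at $i$, so $[i, j) \subset K_i^c$ holds precisely when its right endpoint $j-1$ lies in $K_i^c$, i.e.\ when $j-1 \notin K_i$. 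The degenerate possibilities ($K_j$ or $K_i$ equal to $\emptyset$ or all of $\ZZ/2n\ZZ$) are easily verified case by case to make both sides of the asserted equivalence hold or fail in tandem.

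No genuine obstacle arises; the only bookkeeping subtlety is the shift between linear and cyclic intervals when passing between the $i < j$ and $j < i$ cases, and this has already been absorbed into the reduction above.
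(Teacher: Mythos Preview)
Your proposal is correct and follows essentially the same approach as the paper. The paper's proof is a one-sentence appeal to ``the above discussion and the fact that $K_i$, resp.\ $K_j$, is either empty, all of $\ZZ/2n\ZZ$, or an interval with upper endpoint $i$, resp.\ $j$''; you have simply made explicit the reduction to a single cyclic containment $[i,j)\subset K_i^c\cap K_j$ and then spelled out why the interval structure forces this to be detected at the two endpoints.
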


\begin{proof}
This is clear from the above discussion and the fact that $K_i$, resp.\ $K_j$, is either empty, all of $\ZZ/2n\ZZ$, or an interval with upper endpoint $i$, resp.\ $j$. 
\end{proof}

\subsection{Reflections and the Bruhat order}\label{ss:refl_BO}
We continue with our $w \in \wt W$ and affine root $\alpha = \alpha_{i,j;d}$ with $i < j \neq i^*$.  The elements $w$ and $s_\alpha w$ are related in the Bruhat order, and we have $w < s_\alpha w$ exactly when our base alcove $A$ and the alcove $wA$ lie on the same side of the hyperplane in \aaa where $\alpha$ vanishes.  We wish to understand this condition in terms of $\alpha$ and the extended alcove attached to $w$.

Actually, instead of working directly with $A$, it will be more convenient to use the analogous alcove $A'$ for the symplectic group: this is the interior of the convex hull in \aaa of the $n+1$ points
\[
   a_k' := \frac{\omega_k + \omega_{2n-k}}2 \mod\RR\cdot (1,\dotsc,1) 
   \quad\text{for}\quad
   0 \leq k \leq n.
\]
Then $A' \subset A$, so that it suffices to use $A'$ and $wA'$ to detect the Bruhat relation between $w$ and $s_\alpha w$.  The vertices of $wA'$ are
\[
   wa_k' = \frac{v_k + v_{2n-k}}2 \mod\RR \cdot (1,\dotsc,1)
   \quad\text{for}\quad
   0 \leq k \leq n.
\]
Hence
\[\tag{$*$}\label{disp:alpha_vals}
   \begin{aligned}
      \alpha(wa_k') 
         &= \frac{\chi_{K_j}(k) - \chi_{K_i}(k) - \chi_{[i,j)}(k)}2 \\
         &\qquad\qquad\qquad\quad + \frac{\chi_{K_j}(2n-k) - \chi_{K_i}(2n-k) - \chi_{[i,j)}(2n-k)}2 -d
   \end{aligned}
\]
for $0 \leq k \leq n$.

When $d \geq 0$, the values of $\alpha$ on the vertices of $A'$ are nonpositive.  Hence, in this case,
\[
   w < s_\alpha w
   \iff
   \text{the value in \eqref{disp:alpha_vals} is negative for some $k$.}
\]
On the other hand, when $d \leq -1$, the values of $\alpha$ on the vertices of $A'$ are nonnegative.  Hence, in this case,
\[
   w < s_\alpha w
   \iff
   \text{the value in \eqref{disp:alpha_vals} is positive for some $k$.}
\]

The following lemma builds on \eqref{st:GL-perm_cond} to give a useful characterization of when $s_\alpha w$ is $GL$-permissible and $w < s_\alpha w$.

\begin{lem}\label{st:GL-perm_BO_cond}
Let $i$, $j \in \{1,\dotsc,2n\}$ with $j \neq i$, $i^*$.  Suppose that $i$ is proper, so that $K_i$ is an interval $[\wt\imath,i)$ with $\wt\imath \neq i$.  Then
\[
   \begin{varwidth}{\textwidth}
      \centering
      either $i < j$, $s_{i,j;0}w$ is $GL$-permissible, and $w < s_{i,j;0} w$;\\
      or $j < i$, $s_{j,i;-1}w$ is $GL$-permissible, and $w < s_{j,i;-1} w$
   \end{varwidth}
   \iff
   i \in K_j \text{ and } \wt\imath \notin K_j.
\]
\end{lem}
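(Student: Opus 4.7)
The strategy is to combine the $GL$-permissibility criterion \eqref{st:GL-perm_cond} with an analysis of the Bruhat inequality through the explicit formula \eqref{disp:alpha_vals}. By \eqref{st:GL-perm_cond}, the $GL$-permissibility of the reflected element (namely $s_{i,j;0}w$ when $i<j$ or $s_{j,i;-1}w$ when $j<i$) is equivalent to $i \in K_j$ and $j-1 \notin K_i$. Since $K_i = [\wt\imath, i)$, this says that $j$ lies in the cyclic interval $(i, \wt\imath]$. It therefore remains to show, under this $GL$-permissibility, that the associated Bruhat inequality is equivalent to $\wt\imath \notin K_j$.

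For the case $i<j$, $d=0$, the formula \eqref{disp:alpha_vals} expresses $\alpha(wa_k')$ as the average of $f(k)$ and $f(2n-k)$, where
\[
   f(k) := \chi_{K_j}(k) - \chi_{K_i}(k) - \chi_{[i,j)}(k).
\]
Under $GL$-permissibility, $f$ takes values in $\{-1,0,1\}$ and vanishes on $[i,j)$; the Bruhat inequality becomes the existence of $k \in \{0,\dotsc,n\}$ with $f(k)+f(2n-k)<0$. The parallel case $j<i$, $d=-1$ is handled by an analogous $f$ with values in $\{-2,-1,0\}$, and the Bruhat inequality reads $f(k)+f(2n-k)>-2$.

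The heart of the argument then consists of two verifications. First, if $\wt\imath \in K_j$: combining with $i \in K_j$ and the interval structure $K_j = [\wt\jmath, j)$ forces $K_i \subset K_j$, so $f$ attains nowhere the sign needed to produce the Bruhat inequality, and hence $w \not< s_\alpha w$. Second, if $\wt\imath \notin K_j$: since $\wt\imath$ is the lower endpoint of $K_i$, one has $\wt\imath \in K_i \setminus K_j$, so $f(\wt\imath)$ attains the extreme value of the required sign; the remaining task is to verify that $f(2n-\wt\imath)$ cannot cancel this contribution. This latter step is the main obstacle, and I expect to resolve it by combining the duality $K_{m^*} = -K_m^c$ of \s\ref{ss:K_m} with the $GL$-permissibility constraints: the supposition that $f(2n-\wt\imath)$ attains the extreme opposite value forces $2n-\wt\imath$ into a specific intersection of cyclic intervals whose non-emptiness is incompatible with $\wt\imath\notin K_j$ and $\wt\imath<i$. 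A short case analysis splitting according to the position of $\wt\imath$ relative to $n$, together with the symmetry between the two displayed alternatives which interchanges the roles of $(i,j,d=0)$ and $(j,i,d=-1)$, completes the argument.
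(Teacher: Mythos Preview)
Your overall strategy matches the paper's: reduce to \eqref{st:GL-perm_cond} for $GL$-permissibility, then analyze the sign of $\alpha(wa_k')$ via the function $f(k)=\chi_{K_j}(k)-\chi_{K_i}(k)-\chi_{[i,j)}(k)$. Your treatment of the forward direction (showing $\wt\imath\in K_j$ forces $K_i\subset K_j$ and hence $f\geq 0$) is fine, and the paper in fact leaves that direction to the reader.

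Where you diverge is in the $\Longleftarrow$ direction. You locate the value $f(\wt\imath)=-1$ and then treat the possibility $f(2n-\wt\imath)=+1$ as ``the main obstacle,'' proposing a duality-based case analysis (which you do not actually carry out, and whose stated hypothesis ``$\wt\imath<i$'' has no clear meaning here). The paper avoids this obstacle entirely by computing $f$ \emph{exactly}: under the hypotheses $i\in K_j$ and $\wt\imath\notin K_j$ one has $[i,j)\subset K_j$, so $\chi_{K_j}-\chi_{[i,j)}=\chi_{[\wt\jmath,i)}$; and since $\wt\imath\notin K_j$ forces $\wt\jmath\in(\wt\imath,i]$, one gets $[\wt\jmath,i)\subset K_i$ and hence
\[
   f=\chi_{K_j}-\chi_{K_i}-\chi_{[i,j)}=-\chi_{[\wt\imath,\wt\jmath)}.
\]
Thus $f\leq 0$ everywhere, so no cancellation is possible; and $[\wt\imath,\wt\jmath)\neq\emptyset$ because $\wt\imath\neq\wt\jmath$ (injectivity of $m\mapsto\wt m$), so $\alpha(wa_k')<0$ for some $k$. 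The case $j<i$ is handled by the same identity after rewriting $-\chi_{[j,i)}+1=\chi_{[i,j)}$. This single computation replaces your projected case analysis and closes the argument in one line.
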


\begin{proof}
We'll only need to use the implication ``$\Longleftarrow$'' later on, so we'll just prove that and leave the implication ``$\Longrightarrow$'' to the reader.  Let $\alpha$ denote the affine root $\alpha_{i,j;0}$ or $\alpha_{j,i;-1}$ according as $i < j$ or $j < i$.

We first address $GL$-permissibility.  By \eqref{st:GL-perm_cond}, regardless of the ordering of $i$ and $j$, we must show $j - 1 \notin K_i$.  But our hypotheses $i \in K_j$ and $\wt\imath \notin K_j$ clearly imply $j-1 \in [i,\wt\imath) = K_i^c$, where the superscript $c$ denotes the complement in $\ZZ/2n\ZZ$, as desired.

So it remains to show $w < s_\alpha w$.  We first suppose $i < j$, which leads us to look at the expression $\chi_{K_j} - \chi_{K_i} - \chi_{[i,j)}$.  Since $j$ is plainly proper by hypothesis, $K_j$ is an interval $[\wt\jmath,j)$ for some $\wt\jmath\neq j$.  Since $i \in K_j$, we have $[i,j) \subset K_j$, and
\[
   \chi_{K_{j}} - \chi_{[i,j)} = \chi_{[\wt\jmath,i)},
\]
where we interpret $[\wt\jmath,i) = \emptyset$ if $\wt\jmath = i$.  Moreover, since $\wt\imath \notin K_j$ and $i \in K_j$, we have $\wt\jmath \in [\wt\imath + 1 , i+1)$.  Hence $\wt\jmath - 1 \in K_i$.  Hence
\[
   \chi_{K_{j}} - \chi_{[i,j)} - \chi_{K_i} = -\chi_{[\wt\imath,\wt\jmath)}.
\]
Note that here $\wt\imath \neq \wt\jmath$ by injectivity of the map $m \mapsto \wt m$.  Hence
\[
   \alpha(wa_k') = \frac{-\chi_{[\wt\imath,\wt\jmath)}(k) - \chi_{[\wt\imath,\wt\jmath)}(2n-k)}{2}
\]
is certainly negative for some $0 \leq k \leq n$, as desired.

In the case $j < i$ with $\alpha = \alpha_{j,i;-1}$, one must find a vertex of $wA'$ on which $\alpha$ is positive.  This time one considers the expression
\[
   \chi_{K_i} - \chi_{K_j} - \chi_{[j,i)} + 1 = \chi_{K_i} - \chi_{K_j} + \chi_{[i,j)},
\]
which by the above reasoning equals $\chi_{[\wt\imath, \wt\jmath)}$, and the rest of the proof goes through similarly.
\end{proof}

As an important application, we obtain the following lemma.

\begin{lem}\label{st:[wta,a)_lem}
Let $r \in \{1,\dotsc,2n\}$, and suppose that $K_r$ is an interval $[\wt r, r)$ for some $\wt r \neq r$, $r^*$.
\begin{enumerate}
\renewcommand{\theenumi}{\roman{enumi}}
\item\label{it:case1}
   If $[r,\wt r) \subset K_{\wt r}$ and $r < \wt r$, let $\alpha := \alpha_{r,\wt r;0}$.
\item\label{it:case2}
   If $[r,\wt r) \subset K_{\wt r}$ and $\wt r < r$, let $\alpha := \alpha_{\wt r, r;-1}$.
\item\label{it:case3}
   If $K_{\wt r} \subset [r,\wt r)$ and $r < \wt r$, let $\alpha := \alpha_{r,\wt r;-1}$.
\item\label{it:case4}
   If $K_{\wt r} \subset [r,\wt r)$ and $\wt r < r$, let $\alpha := \alpha_{\wt r, r;0}$.
\end{enumerate}
Then in each case, $s_\alpha w$ is $GL$-permissible and $w < s_\alpha w$.
\end{lem}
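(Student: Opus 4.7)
My plan is to deduce each of the four cases as a direct application of Lemma \ref{st:GL-perm_BO_cond}. The key combinatorial observation making the cases exhaustive is that $K_{\wt r}$ and the complement $K_r^c = [r, \wt r)$ are both intervals in $\ZZ/2n\ZZ$ sharing the same upper endpoint $\wt r$, so one must contain the other; moreover, since $r$ is proper and $\wt r \neq r, r^*$, the element $\wt r$ is itself proper, so that $K_{\wt r} = [\wt{\wt r}, \wt r)$ really is an interval with upper endpoint $\wt r$. The two containment relations $[r,\wt r) \subset K_{\wt r}$ and $K_{\wt r} \subset [r,\wt r)$ together with the two orderings of $r$ and $\wt r$ then give exactly the four cases listed.

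For cases \eqref{it:case1} and \eqref{it:case2}, in which $[r,\wt r) \subset K_{\wt r}$, I would apply Lemma \ref{st:GL-perm_BO_cond} with $i := r$ and $j := \wt r$. Its hypotheses are met: $r$ is proper by assumption, and $\wt r \neq r, r^*$ as given. The two conditions needed to invoke the lemma also hold: $r \in K_{\wt r}$ because $r \in [r,\wt r) \subset K_{\wt r}$, and $\wt\imath = \wt r \notin K_{\wt r}$ because $\wt r$ is the (excluded) upper endpoint of $K_{\wt r}$. According as $r < \wt r$ or $\wt r < r$, the lemma then produces exactly the reflection $s_{r,\wt r;0}$ of case \eqref{it:case1} or $s_{\wt r, r;-1}$ of case \eqref{it:case2}, together with the assertions of $GL$-permissibility and $w < s_\alpha w$.

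For cases \eqref{it:case3} and \eqref{it:case4}, in which $K_{\wt r} \subset [r,\wt r)$, I would instead apply the lemma with $i := \wt r$ and $j := r$; this is legal because $\wt r$ is proper and $r \neq \wt r, (\wt r)^*$, the latter being equivalent to the given $\wt r \neq r^*$. The required conditions $\wt r \in K_r$ and $\wt{\wt r} \notin K_r$ hold for similarly easy reasons: $\wt r$ is the (included) lower endpoint of $K_r = [\wt r, r)$, and $\wt{\wt r} \in K_{\wt r} \subset [r,\wt r) = K_r^c$ by the present case hypothesis. According as $r < \wt r$ (giving $j < i$ in the lemma) or $\wt r < r$ (giving $i < j$), we obtain the reflection $s_{r, \wt r; -1}$ of case \eqref{it:case3} or $s_{\wt r, r; 0}$ of case \eqref{it:case4}, with the required properties. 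There is no real obstacle in this lemma beyond correctly matching the four cases to the two subcases of Lemma \ref{st:GL-perm_BO_cond}; all the substantive work — the $GL$-permissibility conditions from \s\ref{ss:refl_GL-perm} and the Bruhat-order computation from \s\ref{ss:refl_BO} — has already been absorbed into that lemma.
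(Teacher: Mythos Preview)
Your proof is correct and follows exactly the same approach as the paper: apply Lemma~\ref{st:GL-perm_BO_cond} with $(i,j) = (r,\wt r)$ for cases \eqref{it:case1}--\eqref{it:case2} and with $(i,j) = (\wt r,r)$ for cases \eqref{it:case3}--\eqref{it:case4}, verifying the conditions $i \in K_j$ and $\wt\imath \notin K_j$ in each instance. Your write-up is slightly more detailed (explicitly noting why $\wt r$ is proper and why the four cases are exhaustive), but the argument is the same.
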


Note that, since $[r,\wt r)$ and $K_{\wt r}$ are both intervals with upper endpoint $\wt r$, the hypotheses in at least one of \eqref{it:case1}--\eqref{it:case4} will always be satisfied.  So the force of the lemma is that, provided $r$ is proper and $\wt r \neq r^*$, we always get an affine reflection that preserves $GL$-permissibility and increases length.

\begin{proof}[Proof of \eqref{st:[wta,a)_lem}]
We use \eqref{st:GL-perm_BO_cond}.  To handle \eqref{it:case1} and \eqref{it:case2}, we must show $r \in K_{\wt r}$ and $\wt r \notin K_{\wt r}$, both of which are obvious.  To handle \eqref{it:case3} and \eqref{it:case4}, we must show $\wt r \in K_r$ and $\hspace{0.2ex}\wt{\hspace{-0.2ex}\wt r} \notin K_r$.  The first of these is obvious, and the second follows from
\[
   \wt{\hspace{-0.2ex}\wt r} \in K_{\wt r} \subset [r,\wt r) = K_r^c. \qedhere
\]
\end{proof}

\subsection{Reflections and the spin condition}\label{ss:refl_spin}
We continue with our $w$ and $\alpha = \alpha_{i,j;d}$ with $i < j \neq i^*$.  We now suppose that $w$ and $s_\alpha w$ are $GL$-permissible, and we wish to relate the spin condition on $s_\alpha w$ to the spin condition on $w$.  By \s\ref{ss:refl_GL-perm}, we must have $d = 0$ or $d = -1$.  Let $l_1 < l_2 < l_3 < l_4$ denote the elements of the set $\{i,i^*,j,j^*\}$ in increasing order, 
and consider the sets $E_k^w$ and $E_k^{s_\alpha w}$ \eqref{disp:E_i} for $0 \leq k \leq n$.  It is easy to verify that for $1 \leq k < l_1$ and for $l_2 \leq k \leq n$, the sets $E_k^w$ and $E_k^{s_\alpha w}$ are equal or conjugate by the permutation $(i,j)(i^*,j^*)$.  Hence, if $w$ is $\mu$-spin-permissible, then we at least know that the totally isotropic $E_k^{s_\alpha w}$ for $k \in \{0,\dotsc,n\} \smallsetminus [l_1,l_2)$ are $W^\circ$-conjugate to $E^w_0$, and hence to $E^{t_\mu}_0$.

It is a more subtle matter to handle the $E_k^{s_\alpha w}$'s for $k \in [l_1,l_2)$.  Since $i < j \neq i^*$, there are four possibilities to consider:
\[
   i < j < j^* < i^*, \quad i < j^* < j < i^*, \quad j^* < i < i^* < j,\quad\text{or}\quad j^* < i^* < i < j.
\]
In each case, one element $\ext(i,j)$ of the pair $i$, $j$ is extremal amongst the four elements, and the other element $\int(i,j)$ of the pair is not; and ditto for the pair $i^*$, $j^*$.  For fixed $k \in [l_1,l_2)$, one verifies that either
\[
   \ext(i,j) \notin E_k^w,E_k^{s_\alpha w},
   \quad
   \int(i,j) \in E_k^w,E_k^{s_\alpha w},
   \quad\text{and}\quad
   E_k^{s_\alpha w} = (i^*,j^*) \cdot E_k^w;
\]
or
\[
   \ext(i,j)^* \notin E_k^w,E_k^{s_\alpha w},
   \quad
   \int(i,j)^* \in E_k^w,E_k^{s_\alpha w},
   \quad\text{and}\quad
   E_k^{s_\alpha w} = (i,j) \cdot E_k^w.
\]

\begin{eg}\label{eg:E's}
The following illustration of our discussion will come up explicitly in \s\ref{ss:pf}.  Assume that $i < j < j^* < i^*$ and that
\[
   i,j^* \in E_{i-1}^w, \quad
   i^*,j \notin E_{i-1}^w, \quad
   i,i^* \notin E_i^w, \quad\text{and}\quad
   j,j^* \in E_i^w.
\]
Then, displaying the $i$th, $j$th, $j^*$th, and $i^*$th entries,
\[
   v_{i-1} = (\dotsc,0,\dotsc,1,\dotsc,0,\dotsc,1,\dotsc)
   \quad\text{and}\quad
   v_i = (\dotsc,0,\dotsc,0,\dotsc,0,\dotsc,1,\dotsc).
\]
Hence
\begin{align*}
   s_{i,j;0}v_{i-1} &= (\dotsc,1,\dotsc,0,\dotsc,1,\dotsc,0,\dotsc),\\
   s_{i,j;0}v_i &= (\dotsc,0,\dotsc,0,\dotsc,1,\dotsc,0,\dotsc),\\
   s_{i,j;-1}v_{i-1} &= (\dotsc,0,\dotsc,1,\dotsc,0,\dotsc,1,\dotsc),
                                 \quad\text{and}\\
   s_{i,j;-1}v_i &= (\dotsc,-1,\dotsc,1,\dotsc,0,\dotsc,1,\dotsc).
\end{align*}
Hence for $\alpha = \alpha_{i,j;0}$, provided $s_\alpha w$ is $GL$-permissible, we conclude
\[
   E_{i-1}^{s_\alpha w} = (i,j)(i^*,j^*) E_{i-1}^w
   \quad\text{and}\quad
   E_i^{s_\alpha w} = (i^*,j^*) E_i^w;
\]
and for $\alpha = \alpha_{i,j;-1}$, provided $s_\alpha w$ is $GL$-permissible, we conclude
\[
   E_{i-1}^{s_\alpha w} = E_{i-1}^w
   \quad\text{and}\quad
   E_i^{s_\alpha w} = (i,j) E_i^w.
\]
Either way, we conclude $E_{i-1}^{s_\alpha w} = E_i^{s_\alpha w}$.  The same conclusions plainly hold if $i < j^* < j < i^*$.
\end{eg}

Part \eqref{st:spin_fail_i} of the following lemma summarizes the first paragraph of this subsection, and part \eqref{st:spin_fail_ii} is an immediate consequence of the second paragraph.

\begin{lem}\label{st:spin_fail}
Suppose that $w$ is $\mu$-spin-permissible and that $s_\alpha w$ is $GL$-permissi\-ble.
\begin{enumerate}
\renewcommand{\theenumi}{\roman{enumi}}
\item\label{st:spin_fail_i}
   $s_\alpha w$ fails to be $\mu$-spin-permissible $\iff$ there exists $k \in [l_1,l_2)$ such that $E_k^{s_\alpha w}$ is totally isotropic and not $W^\circ$-conjugate to $E_0^{s_\alpha w}$.
\item\label{st:spin_fail_ii}
   For $k\in[l_1,l_2)$, $E_k^{s_\alpha w}$ is totally isotropic $\iff$ $\ext(i,j)$, $\ext(i,j)^* \notin E_k^w$; $\int(i,j)$, $\int(i,j)^* \in E_k^w$; and for every $r\in\{1,\dotsc,2n\} \smallsetminus \{i,j,i^*,j^*\}$, the set $E_k^w$ contains exactly one element from the pair $r$, $r^*$.\qed
\end{enumerate}
\end{lem}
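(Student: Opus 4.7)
The plan is to synthesize the two preceding paragraphs of Section~\ref{ss:refl_spin} into the two assertions; no substantive new calculation is required beyond what those paragraphs already record.

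For part~\eqref{st:spin_fail_i}, I would first note that the permutation $(i,j)(i^*,j^*)$ is a product of two disjoint transpositions (the hypothesis $j \neq i, i^*$ makes the pairs $\{i,j\}$ and $\{i^*,j^*\}$ disjoint), so it is even; since it also respects the involution $r \mapsto r^*$, it lies in $W^\circ \subset S^h_{2n}$. Hence the first paragraph gives, for every $k \in \{0, \dots, n\} \smallsetminus [l_1, l_2)$, a $W^\circ$-equivalence between $E_k^w$ and $E_k^{s_\alpha w}$. Since $l_1 \geq 1$, the index $k = 0$ is included in this range, so $E_0^{s_\alpha w}$ sits in the $W^\circ$-orbit of $E_0^w$. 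Combining this with the $\mu$-spin-permissibility hypothesis on $w$, every totally isotropic $E_k^{s_\alpha w}$ for $k \notin [l_1, l_2)$ is already $W^\circ$-conjugate to $E_0^{s_\alpha w}$; by the definition of $\mu$-spin-permissibility in \S\ref{ss:Sch_vties_Mspin}, this reduction is exactly the content of~\eqref{st:spin_fail_i}.

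For part~\eqref{st:spin_fail_ii}, the dichotomy in the second paragraph identifies, for each $k \in [l_1, l_2)$, an explicit transposition $\sigma \in \{(i^*,j^*), (i,j)\}$ with $E_k^{s_\alpha w} = \sigma \cdot E_k^w$, along with information about which of $i$, $j$, $i^*$, $j^*$ belong to $E_k^w$. Since $\sigma$ is supported on $\{i, j, i^*, j^*\}$, the condition that $E_k^{s_\alpha w}$ be totally isotropic decouples into (a) the outside requirement that $E_k^w$ contain exactly one element of each pair $\{r, r^*\}$ for $r \notin \{i, j, i^*, j^*\}$, and (b) a requirement on the quadruple itself. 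I would then check, in each of the four possible orderings of $i$, $j$, $i^*$, $j^*$, that pulling~(b) back through $\sigma$ translates it into the stated constraints $\ext(i,j), \ext(i,j)^* \notin E_k^w$ and $\int(i,j), \int(i,j)^* \in E_k^w$.

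The only real ``work'' is the case check in part~\eqref{st:spin_fail_ii}, but this is bookkeeping across a handful of symmetric cases rather than a genuine obstacle; the first paragraph's $W^\circ$-evenness observation and the second paragraph's dichotomy supply all the content of the lemma.
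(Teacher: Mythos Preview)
Your proposal is correct and matches the paper's own treatment: the lemma is stated with a \qed and no separate proof, the paper having already explained that part~\eqref{st:spin_fail_i} ``summarizes the first paragraph of this subsection'' and part~\eqref{st:spin_fail_ii} ``is an immediate consequence of the second paragraph.'' Your write-up simply unpacks those two sentences --- making explicit that $(i,j)(i^*,j^*)\in W^\circ$ and that the dichotomy in the second paragraph, combined with the membership information it records, pins down the totally isotropic condition on $E_k^{s_\alpha w}$ in terms of $E_k^w$ --- which is exactly what the paper intends.
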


\subsection{Completion of the proof of ({\protect \ref{st:main_result})}}\label{ss:pf}
We now commence the proof proper that $\Permsp(\mu) \subset \Adm^\circ(\mu)$.  We assume from now on that $w$ is $\mu$-spin-permissible and not a translation element in $\wt W$, and we must find an affine root $\alpha$ such that $s_\alpha w$ is $\mu$-spin-permissible and $w < s_\alpha w$.

To say that $w$ is not a translation element is precisely to say that some element in $\{1,\dotsc,2n\}$ is \emph{proper}; let us denote by $a$ the minimal proper element in $\{1,\dotsc,2n\}$.  Then
\[
   E_0^w = E_1^w = \dotsb = E_{a-1}^w \neq E_a^w.
\]
Since $a$ is proper $\iff$ $a^*$ is proper, we have $a \leq n$, and $a^*$ is the maximal proper element in $\{1,\dotsc,2n\}$.  As usual, we have $K_a = [\wt a ,a)$ for some $\wt a \neq a$; and our minimality assumption implies $a < \wt a$.  We claim $\wt a \neq a^*$.  For suppose to the contrary that $\wt a = a^*$.  Since $a^*$ is the maximal proper element, we have $E_{a^*}^w = E_0^w$.  Hence $E_{a^*}$ is totally isotropic.  But $E_{a^*}^w = (a,a^*)\cdot E_{a^*-1}^w$ \eqref{disp:E_fmla}. Hence $E_{a^*-1}^w$ is totally isotropic too but not $W^\circ$-conjugate to $E_{a^*}^w$, in violation of the spin condition.

%

Since $\wt a \neq a^*$, \eqref{st:[wta,a)_lem}, applied with $r = a$, immediately furnishes an affine root $\alpha$ such that $s_\alpha w$ at least is $GL$-permissible and $w < s_\alpha w$.  Unfortunately, in general, $s_\alpha w$ need not satisfy the spin condition.  To modify our choice of $\alpha$ if necessary, we shall need to set up a case analysis.

Since $a$ is proper, the set $E_a^w \smallsetminus E_{a-1}^w$ consists of a single proper element $b$, and $K_b = [a,b)$.  Of course $b \neq a$; and it follows from the inequality $\wt a \neq a^*$ that $b \neq a^*$.  By minimality of $a$, we thus have $a < m < m^* < a^*$, where $m := \min\{b,b^*\}$.  Since $E_{a-1}^w$ is totally isotropic, it must contain $b^*$, and we conclude $b$, $b^* \in E_k^w$ for all $k \in [a,m)$.  Note that by taking $r = b$ in \eqref{st:[wta,a)_lem}, we again get an affine root $\alpha$ such that $s_\alpha w$ is $GL$-permissible and $w < s_\alpha w$, but we again have the problem that $s_\alpha w$ may not satisfy the spin condition.

We shall base our case analysis on the existence of proper elements in $[a,m)$ that satisfy certain conditions.  Note that if $i \in [a,m)$ is proper, then $\wt \imath$ and $\wt \imath^*$ are proper too.  Hence $a \leq \wt \imath, \wt \imath^* \leq a^*$.

\subsubsection{Case: There exists a proper $r \in [a,m)$ such that $\wt r \neq r^*$ and $\min\{\wt r, \wt r^*\} < m$.}\label{sss:caseA}
Then \eqref{st:[wta,a)_lem}, applied to the element $r$, furnishes an affine root $\alpha$ such that $s_\alpha w$ is $GL$-permissible and $w < s_\alpha w$.  To see that $s_\alpha w$ satisfies the spin condition, write $l_1 < l_2 < l_3 < l_4$ for the elements $r$, $r^*$, $\wt r$, $\wt r^*$ in increasing order.  Our case assumption implies $[l_1,l_2) \subset [a,m)$.  Hence $b$, $b^* \in E_k^w$ for all $k \in [l_1,l_2)$.  Hence, by \eqref{st:spin_fail}, $E_k^{s_\alpha w}$ is not totally isotropic for such $k$ and $s_\alpha w$ satisfies the spin condition.\bigskip

In the remaining two cases we shall assume there exists no proper $r \in [a,m)$ as in \eqref{sss:caseA}.  Hence for every proper $i \in [a,m)$ with $\wt\imath \neq i^*$, we have $m \leq \wt i, \wt i^* \leq m^*$.  In particular, we have $m \leq \wt a, \wt a^* \leq m^*$, so that $a$, $a^* \notin E_k^w$ for all $k\in [a,m)$.

\subsubsection{Case: There exists no $r$ as in \eqref{sss:caseA}, and there exists a proper $l \in [a,m)$ distinct from $a$}\label{sss:caseB}
In this case we have
\[
   a < l < m \leq \wt a, \wt a^* \leq m^* < l^* < a^*.
\]
We consider the possibilities $\wt l = l^*$ and $\wt l \neq l^*$ separately.

If $\wt l = l^*$, then $K_l = [l^*,l)$ and $K_{l^*} = [l,l^*)$.  Plainly $a$,~$a^* \in K_l$ and $\wt a$,~$\wt a^* \notin K_l$.  Hence $s_{a,l;0} w$ and $s_{l,a^*;-1} w$ are $GL$-permissible and $w < s_{a,l;0} w$, $s_{l,a^*;-1} w$ \eqref{st:GL-perm_BO_cond}.  Moreover, since $K_l$ and $K_{l^*}$ are disjoint, it is immediate from \eqref{st:spin_fail} that $s_{a,l;0} w$ and $s_{l,a^*;-1} w$ both satisfy the spin condition.  We remark that similar reasoning reveals that one can also use either of the reflections $s_{b,l^*;0}$ or $s_{b^*,l^*;0}$.

If $\wt l \neq l^*$, then $m \leq \wt l, \wt l^* \leq m^*$ by our case assumption, and we take $\alpha := \alpha_{l,m;0}$.  Plainly
\[
   l \in [a,m) \subset K_m  \quad\text{and}\quad \wt l^* -1 \in [a,m^*) \subset K_{m^*}.
\]
By duality, the second displayed containment implies $\wt l \notin K_m$.  Hence $s_\alpha w$ is $GL$-permissible and $w < s_\alpha w$ \eqref{st:GL-perm_BO_cond}.  To check the spin condition, recall that $a$, $a^* \notin E_k^w$ for all $k \in [a,m) \supset [l,m)$.  Hence, by \eqref{st:spin_fail}, $E_k^{s_\alpha w}$ is not totally isotropic for such $k$, and $s_\alpha w$ is $\mu$-spin-permissible.\bigskip

Having dispensed with the above two cases, we are left with just the following case to consider.

\subsubsection{Case: $a$ is the only proper element in $[a,m)$}
By taking $r = b$ in \eqref{st:[wta,a)_lem}, we have that $s_\alpha w$ is $GL$-permissible and $w < s_\alpha w$ for $\alpha := \alpha_{a,b;0}$ or $\alpha := \alpha_{a,b;-1}$.  Thus we reduce to proving the claim:
\[
   \text{\emph{If $s_\alpha w$ is $GL$-permissible for $\alpha \in \{\alpha_{a,b;0}, \alpha_{a,b;-1}\}$, then $s_\alpha w$ is spin-permissible.}}
\]
So suppose we have such an $\alpha$. Our minimality assumption on $a$ and our case assumption together imply
\[
   E_0^w = E_1^w = \dotsb = E_{a-1}^w \neq E_a^w = E_{a+1}^w = \dotsb = E_{m-1}^w.
\]
But this places us exactly in the situation of \eqref{eg:E's}, with $i = a$ and $j = b$.  Hence for either possible $\alpha$, we have equalities
\[
   E_0^{s_\alpha w} = E_1^{s_\alpha w} = \dotsb = E_{m-1}^{s_\alpha w}.
\]
Hence $s_\alpha w$ is $\mu$-spin-permissible by \eqref{st:spin_fail}.

This completes our case analysis, and with it the proof of \eqref{st:main_result}.\qed

\begin{rk}
Implicit in our proof is a slight simplification of part of the proof \cite{kottrap00}*{5.8} of the main result for $GL_n$ in Kottwitz's and Rapoport's paper.  Indeed, our Lemma \ref{st:[wta,a)_lem}, formulated without the requirement that $\wt r \neq r^*$, continues to hold in the $GL_n$ setting.  So, using the language of \cite{kottrap00}, if $w \in \wt W_{GL_n}$ has minuscule associated alcove $\mathbf v$ and is not a translation element, then there must exist a proper $r \in \{1,\dotsc,n\}$, and the lemma immediately furnishes an $\alpha$ such that $s_\alpha \mathbf v$ is minuscule and $w < s_\alpha w$.  On the other hand, \cite{kottrap00} actually proves a little more: namely, that $\alpha$ can always be chosen to satisfy the additional constraint that the translation parts of $w$ and $s_\alpha w$ are the same.  We can find such an $\alpha$ by letting $a$ denote the \emph{minimal} proper element in $\{1,\dotsc,n\}$; then $\alpha_{a,\wt a;0}$ or $\alpha_{a,\wt a;-1}$ does the job.

As noted by Kottwitz and Rapoport, it follows that
\[\tag{$*$}\label{disp:statement}
   \begin{varwidth}{\textwidth}
      \centering
      $w \in \wt W_{GL_n}$ is $\mu$-admissible\\
      for minuscule $\mu$
   \end{varwidth}
   \:\implies\:
   \begin{varwidth}{\textwidth}
      \centering
      $w$ is less than or equal to its\\
      translation part in the Bruhat order.
   \end{varwidth}
\]
Much more generally, Haines \cite{haines01b}*{proof of 4.6}, using Hecke algebra techniques, has shown that \eqref{disp:statement} continues to hold when $\wt W_{GL_n}$ is replaced by the extended affine Weyl group attached to any root datum.  Unfortunately, the arguments in this paper do not seem to yield a direct proof of \eqref{disp:statement} for $\wt W$ and $\mu \in \{\mu_1,\mu_2\}$.
\end{rk}

\subsection{Permissibility and spin-permissibility}\label{ss:perm_spin-perm}
We conclude the paper by showing that Kottwitz's and Rapoport's notion of $\mu$-permissibility \cite{kottrap00} agrees with our notion of $\mu$-spin-permissibility for elements in $\wt W$.  While we have only defined $\mu$-spin-permissibility for $\mu \in \{\mu_1,\mu_2\}$, the notion of $\mu$-permissibility makes sense for any cocharacter $\mu$:  quite generally, $w \in \wt W$ is \emph{$\mu$-permissible} if $w \equiv t_\mu \bmod W_\aff$ and $wx - x \in \Conv(W^\circ \mu)$ for all $x$ in $\wt A$, where $\Conv(W^\circ \mu)$ is the convex hull in $X_*(T) \tensor \RR$ of the $W^\circ$-conjugates of $\mu$, and $\wt A$ is the alcove in $X_*(T) \tensor \RR$ obtained as the inverse image of $A$.  Of course, it is equivalent to require $wx -x \in \Conv(W^\circ \mu)$ for all $x$ in the closure of $\wt A$.  We denote by $\Perm(\mu)$ the set of $\mu$-permissible elements.

\begin{prop}\label{st:perm_spin-perm}
$\Permsp(\mu) = \Perm(\mu)$ for $\mu \in \{\mu_1,\mu_2\}$.
\end{prop}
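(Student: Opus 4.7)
The containment $\Permsp(\mu) \subseteq \Perm(\mu)$ is immediate: Theorem \ref{st:main_result}\eqref{it:Adm(mu)=P} gives $\Permsp(\mu) = \Adm^\circ(\mu)$, and the general Kottwitz--Rapoport result \cite{kottrap00}*{11.2} that admissibility implies permissibility supplies the inclusion.

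For the reverse containment $\Perm(\mu) \subseteq \Permsp(\mu)$, fix $w \in \Perm(\mu)$ with extended alcove $v_0, \dotsc, v_{2n-1}$; we must verify conditions P1, P2, and P3$'$ from \s\ref{ss:Sch_vties_Mnaive}--\s\ref{ss:Sch_vties_Mspin}. Evaluating the convex hull condition at the vertex $x = 0$ of $\wt A$ gives $v_0 = w(0) - 0 \in \Conv(W^\circ \mu) \subset \{\Sigma = n\}$, proving P2. For P1, one combines the containment $\Conv(W^\circ \mu) \subseteq \Conv(S_{2n} \cdot (1^{(n)}, 0^{(n)}))$ with an analysis of $wx - x$ at the remaining vertices of $\wt A$ (namely lifts of $a_{0'}$, the $a_k$ for $2 \leq k \leq n-2$, $a_n$, and $a_{n'}$) to extract the coordinate bounds $\omega_i \leq v_i \leq \omega_i + (1, \dotsc, 1)$; the mechanism parallels Kottwitz and Rapoport's approach in the $GSp_{2n}$ case (\cite{kottrap00}*{12.4}).

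Condition P3$'$ is the essential point. The crucial observation is that $W^\circ \mu_1$ and $W^\circ \mu_2$, whose union exhausts the totally isotropic $0/1$ vectors with $n$ ones, are distinguished by their classes in $X_*(T)/Q^\vee$: since $Q^\vee$ is the coroot lattice of $G^\circ$, the group $W^\circ$ acts trivially on this quotient, while $\mu_1 - \mu_2 \notin Q^\vee$ (its first-$n$-coordinate sum equals $1$). The $W_\aff$-congruence $w \equiv t_\mu$ yields $v_0 \equiv \mu \bmod Q^\vee$, so $\mu_0^w = v_0 \in W^\circ \mu$, handling $i = 0$. For $i > 0$ with $\mu_i^w$ totally isotropic, we invoke the recursion $E_{\wt m}^w = (m, \wt m) \cdot E_{\wt m - 1}^w$ of \s\ref{ss:K_m}: two consecutive totally isotropic $E_k^w$'s can differ only by a pair-swap $(m, m^*)$, which flips the $W^\circ$-orbit between $W^\circ \mu_1$ and $W^\circ \mu_2$. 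Thus P3$'$ amounts to ruling out every such pair-swap transition between consecutive totally isotropic $E_k^w$.

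The hard part will be this last step. The natural strategy is to evaluate $wx - x \in \Conv(W^\circ \mu)$ at an $x$ in the closure of $\wt A$ for which $wx - x$ equals the candidate extremal point $\mu_i^w \in W \cdot (1^{(n)}, 0^{(n)})$; extremality of $\mu_i^w$ in $\Conv(W \cdot (1^{(n)},0^{(n)}))$ would then force $\mu_i^w \in W^\circ \mu$. The difficulty is that $\omega_i \notin X_*(T) \tensor \RR$ for $0 < i < n$, so one cannot simply take $x = \omega_i$; one must instead write $\omega_i$ as an affine combination of vertices of $\wt A$ modulo the fixed subspace of $\sigma$ (where $w = t_{v_0} \sigma$), or else track the $W^\circ$-orbit of $\mu_i^w$ inductively as $i$ varies, excluding the forbidden transitions by a combinatorial argument combining the $GL$-permissibility inequalities with the $W_\aff$-congruence.
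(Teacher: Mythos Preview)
Your argument for the containment $\Permsp(\mu)\subset\Perm(\mu)$ and for conditions \eqref{it:ineq_cond}, \eqref{it:size_cond} is fine and matches the paper. The gap is in your treatment of the spin condition \eqref{it:E_spin_cond}: you correctly identify the obstruction (the $GL$-vertex $\omega_i$ does not lie in $X_*(T)\tensor\RR$ for $0<i<n$), but you do not resolve it. The two strategies you sketch---writing $\omega_i$ as an affine combination modulo a fixed subspace, or an inductive orbit-tracking argument ruling out $(m,m^*)$-transitions---are left unexecuted, and it is not clear either can be made to work without substantial further input.

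The paper's device is much cleaner and you should incorporate it. The point is that while $\omega_k$ itself is not in the closure of $\wt A$, the \emph{symplectic} vertex
\[
   \frac{\omega_k+\omega_{2n-k}}{2}
\]
is (this is exactly the point $a_k'$ of \s\ref{ss:refl_BO}, lying in the closure of $A'\subset A$). Evaluating the permissibility condition there gives
\[
   \frac{\mu_k^w+\mu_{2n-k}^w}{2}\in\Conv(W^\circ\mu).
\]
Now the duality condition \eqref{it:dual_cond} (with $d=0$) yields $\mu_k^w(j)+\mu_{2n-k}^w(j^*)=1$ for all $j$; hence $\mu_k^w$ is totally isotropic precisely when $\mu_k^w=\mu_{2n-k}^w$, and in that case the displayed average collapses to $\mu_k^w$ itself. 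One then finishes with the elementary fact $X_*(T)\cap\Conv(W^\circ\mu)=W^\circ\mu$ for minuscule $\mu$. This handles every $0\le k\le n$ uniformly and bypasses the inductive bookkeeping you were contemplating.
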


\begin{proof}
The containment $\subset$ follows from the equality $\Permsp(\mu) = \Adm^\circ(\mu)$ \eqref{st:main_result} and the general result \cite{kottrap00}*{11.2} that $\mu$-admissibility implies $\mu$-permissibility for any cocharacter $\mu$ in any extended affine Weyl group attached to a root datum.  (Note that while $\wt W$ is not the extended affine Weyl group attached to a root datum, $\wt W^\circ$ is, and the sets in question are all contained in $\wt W^\circ$.)  To prove the reverse containment, suppose $w \in \Perm(\mu)$.  Since $\Conv(W^\circ \mu)$ is contained in $\Conv(W \mu)$ (this is the relevant convex hull that comes up for $GSp_{2n}$), \cite{kottrap00}*{12.4} shows, modulo conventions, that \eqref{it:ineq_cond} and \eqref{it:size_cond} hold for $w$.  It remains to show that if the vector $\mu_k^w$ \eqref{disp:mu_i} is totally isotropic for $0 \leq k \leq n$, then $\mu_k^w \in W^\circ \mu$.  For any $k$, since $\frac{\omega_k + \omega_{2n-k}} 2$ is in the closure of $\wt A$, we have
\[
   \frac{\mu_k^w + \mu_{2n-k}^w} 2 \in \Conv(W^\circ \mu).
\]
But if $\mu_k^w$ is totally isotropic, then $\mu_k^w = \mu_{2n-k}^w$ and the displayed vector equals $\mu_k^w$.  Now use the obvious fact that $X_*(T) \cap \Conv(W^\circ \mu) = W^\circ \mu$.
\end{proof}

Although we didn't need it for the proof, it is not hard to give an explicit description of the convex hull $\Conv(W^\circ\mu)$.  We set $V := X_*(T) \tensor \RR$, and we identify it with
\[
   \{\, (x_1,\dotsc,x_{2n}) \in \RR^{2n} \mid x_1 + x_{2n} = x_2 + x_{2n-1} = \dotsb = x_n + x_{n+1} \,\}.
\]
For $x = (x_1,\dotsc,x_{2n}) \in V$, we write $c(x)$ for the common value $x_1 + x_{2n} = \dotsb = x_n + x_{n+1}$.  We write $x \cdot y$ for the usual dot product of vectors in $\RR^{2n}$.  Then it is readily verified that, when $n$ is odd,
\[
   \Conv(W^\circ \mu) = \biggl\{\, x \in V\, \biggm| 
      \begin{varwidth}{\textwidth}
         \centering
         $(0,\dotsc,0) \leq x \leq (1,\dotsc,1)$, $c(x) = 1$,\\
         and $\mu' \cdot x \geq 1$ for all $\mu' \in W^\circ \mu$
      \end{varwidth}
      \,\biggr\};
\]
and when $n$ is even,
\[
   \Conv(W^\circ \mu) = \biggl\{\, x \in V\, \biggm| 
      \begin{varwidth}{\textwidth}
         \centering
         $(0,\dotsc,0) \leq x \leq (1,\dotsc,1)$, $c(x) = 1$,\\
         and $\mu' \cdot x \geq 1$ for all $\mu' \in \tau W^\circ \mu$
      \end{varwidth}
      \,\biggr\},
\]
where $\tau$ is the usual element from \s\ref{ss:tau}.

\begin{rk}
For $i = 1$, $2$, let $Y_i$ denote the common set
\[
   \Adm^\circ(\mu_i) = \Permsp(\mu_i) = \Perm(\mu_i).
\]
Using a subscript $GL_{2n}$ to denote the corresponding notions for elements in $\wt W_{GL_{2n}}$, let $Z$ denote the common set
\[
   \Adm_{GL_{2n}}(\mu_1) = \Adm_{GL_{2n}}(\mu_2) = \Perm_{GL_{2n}}(\mu_1) = \Perm_{GL_{2n}}(\mu_2);
\]
here we use the equivalence between admissibility and permissibility for minuscule cocharacters in $GL_{2n}$ due to Kottwitz and Rapoport \cite{kottrap00}*{3.5}.  Then we have relations between $Y_1$, $Y_2$, and $Z$,
\[
   Y_1 \sqcup Y_2 \ctndneq Z \cap \wt W^\circ \ctndneq Z \cap \wt W.
\]
(Recall that we always assume $n \geq 2$; here the first $\ctndneq$ becomes an equality when $n = 1$.)
\end{rk}

\begin{bibdiv}
\begin{biblist}


\bib{arz09}{article}{
   author={Arzdorf, Kai},
   title={On local models with special parahoric level structure},
   journal={Michigan Math. J.},
   volume={58},
   date={2009},
   number={3},
   pages={683\ndash 710},
}

\bib{btII}{article}{
  author={Bruhat, F.},
  author={Tits, J.},
  title={Groupes r\'eductifs sur un corps local. II. Sch\'emas en groupes. Existence d'une donn\'ee radicielle valu\'ee},
  language={French},
  journal={Inst. Hautes \'Etudes Sci. Publ. Math.},
  number={60},
  date={1984},
  pages={197--376},
  issn={0073-8301},
}

\bib{goertz01}{article}{
  author={G{\"o}rtz, Ulrich},
  title={On the flatness of models of certain Shimura varieties of PEL-type},
  journal={Math. Ann.},
  volume={321},
  date={2001},
  number={3},
  pages={689--727},
  issn={0025-5831},
}

\bib{goertz03}{article}{
  author={G{\"o}rtz, Ulrich},
  title={On the flatness of local models for the symplectic group},
  journal={Adv. Math.},
  volume={176},
  date={2003},
  number={1},
  pages={89--115},
}

\bib{goertz04}{article}{
   author={G{\"o}rtz, Ulrich},
   title={Computing the alternating trace of Frobenius on the sheaves of
   nearby cycles on local models for $\rm GL\sb 4$ and $\rm GL\sb 5$},
   journal={J. Algebra},
   volume={278},
   date={2004},
   number={1},
   pages={148--172},
   issn={0021-8693},
}

\bib{goertz05}{article}{
  author={G{\"o}rtz, Ulrich},
  title={Topological flatness of local models in the ramified case},
  journal={Math. Z.},
  volume={250},
  date={2005},
  number={4},
  pages={775--790},
  issn={0025-5874},
}

\bib{haines01b}{article}{
   author={Haines, Thomas J.},
   title={Test functions for Shimura varieties: the Drinfeld case},
   journal={Duke Math. J.},
   volume={106},
   date={2001},
   number={1},
   pages={19--40},
}

\bib{hngo02b}{article}{
  author={Haines, Thomas J.},
  author={Ng{\^o}, B. C.}*{inverted={yes}},
  title={Nearby cycles for local models of some Shimura varieties},
  journal={Compositio Math.},
  volume={133},
  date={2002},
  number={2},
  pages={117--150},
  issn={0010-437X},
}

\bib{hngo02a}{article}{
  author={Haines, Thomas J.},
  author={Ng{\^o}, B. C.}*{inverted={yes}},
  title={Alcoves associated to special fibers of local models},
  journal={Amer. J. Math.},
  volume={124},
  date={2002},
  number={6},
  pages={1125--1152},
  issn={0002-9327},
}

\bib{hrap08}{article}{
  author={Haines, T.},
  author={Rapoport, M.},
  title={On parahoric subgroups},
  contribution={appendix to: G. Pappas and M. Rapoport, \emph {Twisted loop groups and their affine flag varieties}, Adv. Math. \textbf {219} (2008), no. 1, 118--198},
}

\bib{iwamat65}{article}{
  author={Iwahori, N.},
  author={Matsumoto, H.},
  title={On some Bruhat decomposition and the structure of the Hecke rings of ${\germ p}$-adic Chevalley groups},
  journal={Inst. Hautes \'Etudes Sci. Publ. Math.},
  number={25},
  date={1965},
  pages={5--48},
  issn={0073-8301},
}

\bib{kottrap00}{article}{
  author={Kottwitz, R.},
  author={Rapoport, M.},
  title={Minuscule alcoves for ${\rm GL}\sb n$ and ${\rm GSp}\sb {2n}$},
  journal={Manuscripta Math.},
  volume={102},
  date={2000},
  number={4},
  pages={403--428},
  issn={0025-2611},
}

\bib{kr03}{article}{
   author={Kr{\"a}mer, N.},
   title={Local models for ramified unitary groups},
   journal={Abh. Math. Sem. Univ. Hamburg},
   volume={73},
   date={2003},
   pages={67--80},
}

\bib{pap00}{article}{
  author={Pappas, Georgios},
  title={On the arithmetic moduli schemes of PEL Shimura varieties},
  journal={J. Algebraic Geom.},
  volume={9},
  date={2000},
  number={3},
  pages={577--605},
  issn={1056-3911},
}

\bib{paprap03}{article}{
  author={Pappas, G.},
  author={Rapoport, M.},
  title={Local models in the ramified case. I. The EL-case},
  journal={J. Algebraic Geom.},
  volume={12},
  date={2003},
  number={1},
  pages={107--145},
  issn={1056-3911},
}

\bib{paprap05}{article}{
  author={Pappas, G.},
  author={Rapoport, M.},
  title={Local models in the ramified case. II. Splitting models},
  journal={Duke Math. J.},
  volume={127},
  date={2005},
  number={2},
  pages={193--250},
  issn={0012-7094},
}

\bib{paprap08}{article}{
  author={Pappas, G.},
  author={Rapoport, M.},
  title={Twisted loop groups and their affine flag varieties},
  contribution={ type={an appendix}, author={Haines, T.}, author={Rapoport}, },
  journal={Adv. Math.},
  volume={219},
  date={2008},
  number={1},
  pages={118--198},
  issn={0001-8708},
}

\bib{paprap09}{article}{
  author={Pappas, G.},
  author={Rapoport, M.},
  title={Local models in the ramified case. III. Unitary groups},
   journal={J. Inst. Math. Jussieu},
   date={2009},
   volume={8},
   number={3},
   pages={507--564},
}

\bib{rap05}{article}{
  author={Rapoport, Michael},
  title={A guide to the reduction modulo $p$ of Shimura varieties},
  language={English, with English and French summaries},
  note={Automorphic forms. I},
  journal={Ast\'erisque},
  number={298},
  date={2005},
  pages={271--318},
  issn={0303-1179},
}

\bib{rapzink96}{book}{
  author={Rapoport, M.},
  author={Zink, Th.},
  title={Period spaces for $p$-divisible groups},
  series={Annals of Mathematics Studies},
  volume={141},
  publisher={Princeton University Press},
  place={Princeton, NJ},
  date={1996},
  pages={xxii+324},
  isbn={0-691-02782-X},
  isbn={0-691-02781-1},
}

\bib{sm09b}{article}{
  author={Smithling, Brian D.},
  title={Admissibility and permissibility for minuscule cocharacters in orthogonal groups},
  date={2010-03-12},
  status={preprint, \href {http://arxiv.org/abs/1001.0937v2}{\texttt {arXiv:1001.0937v2 [math.AG]}}},
}

\bib{sm09c}{article}{
   author={Smithling, Brian D.},
   title={Topological flatness of local models for ramified unitary groups. I. The odd dimensional case},
   date={2010-02-18},
   status={preprint, \href{http://arxiv.org/abs/1002.3520v1}{\texttt{arXiv:1002.3520v1 [math.AG]}}},
}

\bib{sm10a}{article}{
  author={Smithling, Brian D.},
  title={Topological flatness of local models for ramified unitary groups. II. The even dimensional case},
  status={in preparation},
}

\end{biblist}
\end{bibdiv}

\end{document}